\newtheorem{theorem}{Theorem}[section]
\newtheorem{prop}[theorem]{Proposition}
\newtheorem{lemma}[theorem]{Lemma}
\newtheorem{question}[theorem]{Question}
\newtheorem*{claim}{Claim}
\newtheorem{definition}[theorem]{Definition}
\newcommand{\N}{\mathbb N}
\newcommand{\Tau}{\mathcal T}
\newcommand{\Z}{\mathbb Z}
\newcommand{\im}{\operatorname{im}}
\newcommand{\dom}{\operatorname{dom}}
\newcommand{\Homeo}{\operatorname{Homeo}}
\newcommand{\makeset}[2]{\left\lbrace #1 \;\middle|\;
 \begin{tabular}{@{}l@{}}
   #2
  \end{tabular}
  \right\rbrace}
\author{L Elliott}
\email{luna.elliott142857@gmail.com}
\title{The Zariski Topology on Homeomorphism groups}
\begin{document}
\maketitle

\begin{abstract}
    The Zariski topology $\mathfrak{Z}_G$ on a group $G$ is the coarsest topology such that all sets of the form $\{x \in G \mid 1_G \neq g_0 x^{k_0} g_1 \dots g_{l-1} x^{k_{l-1}} g_l\}$ are open. Originally introduced by Bryant as the ``verbal topology,'' it serves as a fundamental tool for investigating the topological structure of infinite groups and is always a \(T_1\) topology with continuous shifts and inversion.
    Since the Zariski topology is coarser than every Hausdorff group topology on $G$, it provides a natural starting point for topologizing groups; specifically, for countable or abelian groups, it is known that the Zariski topology coincides with the Markov topology—the intersection of all Hausdorff group topologies on $G$. 

    In this paper, we analyze the Zariski topology on various homeomorphism groups. We demonstrate that for the Thompson groups $F$ and $T$, the Zariski (and thus Markov) topology coincides with the standard compact-open topology derived from their respective actions on $[0,1]$ and $S^1$. In contrast, we show that the Zariski (and thus Markov) topology on Thompson's group $V$ is irreducible, and therefore neither Hausdorff nor a group topology. As $V$ acts highly transitively on each of its orbits, this result stands in notable opposition to a theorem by Banakh et al, which establishes that the Zariski topology on any permutation group containing all finitely supported permutations is a Hausdorff group topology.

    Our results for the Zariski topologies on \(F,T\) and \(V\) also apply to the full homeomorphism groups $\operatorname{Homeo}([0,1])$, $\operatorname{Homeo}(S^1)$ and $\operatorname{Homeo}(2^\omega)$ respectively. We conclude by providing a classification of the connected manifolds $M$ for which the homeomorphism group $\mathrm{Homeo}(M)$ admits a Hausdorff Zariski topology.
\end{abstract}
\tableofcontents
\section{Notice}

In the original draft of this paper we missed some important references \cite{GARTSIDE2003103,interval1,chang2017minimum}. This was kindly pointed out to the author by Dikranjan and results from these references are exposited in section 7 of \cite{zbMATH06893997}. In  \cite[Theorem 1]{GARTSIDE2003103} (see also \cite[Theorem 7.5]{zbMATH06893997}), Gartside and Glyn show that the compact-open topology is the minimum group topology on any metric one-dimensional manifold. This implies part of our Theorem 1.5.
The result \cite[Theorem 3.4]{interval1} (see also \cite[Theorem 7.6]{zbMATH06893997}) of Megrelishvili and Polev implies that the Zariski topology on the homeomorphism group of the interval \([0,1]\) is the compact-open topology. 
This theorem also applies to the order preserving subgroup and it is commented in \cite{interval1} that the proofs can be modified to work with the circle \(S^1\). These results imply our Theorems 1.1 and 1.2 in the cases of \(\operatorname{Homeo}([0,1]),\operatorname{Homeo}_+([0,1]),\operatorname{Homeo}(S^1)\) and \(\operatorname{Homeo}_+(S^1)\).
In \cite[Corollary 2]{chang2017minimum} (see also \cite[Theorem 7.7]{zbMATH06893997}), Chang and Gartside show that the intersection of the pointwise and compact-open topologies on the homeomorphism groups of any of:
\begin{itemize}
    \item a compact manifold of dimension at least 2,
    \item \(2^\omega\),
    \item \([0,1]^\omega\)
\end{itemize}
 contains no indiscrete group topology. 
 This implies that the Zariski topology on \(\operatorname{Homeo}(2^\omega)\) is not a group topology, which is part of our Theorem 1.3.  
The result \cite[Corollary 6]{chang2017minimum} shows that the compact-open topology on the homeomorphism
group of a compact manifold with non-trivial boundary is not a minimal Hausdorff group topology. Moreover, as commented below \cite[Theorem 7.9]{zbMATH06893997}, the proof of \cite[Theorem 7]{chang2017minimum} implies that the Zariski topology on the Homeomorphism groups of the inverval $[0,1]$ and the circle $S^1$ are the compact-open topologies. In the cases of the full homeomorphism groups, this again implies our Theorems 1.1 and 1.2.

\section{Introduction}

A \emph{group topology} on a group \((G,*, {}^{-1})\) is a topology on the underlying set \(G\) such that the operations \(*:G\times G\to G\) and \({}^{-1}:G\to G\) are continuous.
A \emph{topological group} is then a group equipped with a group topology.
    Having a topology on a group is often a very useful tool for analyzing its group structure, particularly if the topology is in some sense intrinsic.
    Topologies on homeomorphism groups in particular have seen special attention in the literature, see for example \cite{arens1946topologies,dieudonne1948topological,dijkstra2005homeomorphism,mostert1961reasonable}.
    The compact-open topology on the homeomorphism group of a space \(X\), generated by the sets
   \[\makeset{f\in \operatorname{Homeo}(X)}{\((K)f\subseteq U\)}\]
where \(K\subseteq X\) is compact and \(U\subseteq X\) is open, is perhaps the most standard.
    
    A fundamental tool for studying topologies on a group is the \emph{group Zariski topology} $\mathfrak{Z}_G$, introduced by Bryant \cite{bryant1977verbal} as the ``verbal topology.'' There is also a notion of the semigroup Zariski topology for semigroups which can potentially be coarser than the group Zariski topology when applied to a group, the definition is analogous but uses non-solutions to semigroup equations instead of group equations (which disallows inverses). A comparison of the two can be found in \cite{bardyla2025note}.
    In this paper the ``Zariski topology" on a group always refers to the group Zariski topology.
The Zariski topology on groups and semigroups has been extensively studied in the literature. Some example papers include \cite{bryant1977verbal,DDSabelianzariski,banakh2012algebraically,bardyla2025note,pinsker2023zariski,dikranjan2012markov,banakh2010zariski,goffer2024note,elliott2023automatic,marimon2025guide, elliott2022constructing, zbMATH06893997,zbMATH07781608,zbMATH06300237,zbMATH05232892}.
The Zariski topology on a group \(G\) has a subbase consisting of the non-solution sets to equations of the form \(1_G=g_0 x^{k_0} g_1 \dots g_{l-1} x^{k_{l-1}} g_l\) for \(k_0,\ldots, k_{l-1}\in \Z\) and \(g_0,\ldots, g_l\in G\). As the map \(x\mapsto g_0 x^{k_0} g_1 \dots g_{l-1} x^{k_{l-1}} g_l\) is continuous in any topological group and the equalizer of two continuous maps to a Hausdorff space is always closed,  the Zariski topology provides a lower bound for the Hausdorff group topologies on any group $G$. 
For countable groups or abelian groups, this significance is amplified by the work of \cite{markov1946unconditionally, DDSabelianzariski}, which implies that the Zariski topology coincides with the intersection of all Hausdorff group topologies on $G$. This intersection topology is called the \emph{Markov Topology}.
As such, studying the Zariski topology informs us about all Hausdorff group topologies and, in the case of a countable or abelian group, finding a set which is not Zariski-open amounts to finding a Hausdorff group topology on our group with respect to which the set is not open. It should be noted however that the Zariski topology on a group is not always a group topology.

In this paper, we investigate the Zariski topology on various homeomorphism groups. 
We start by focusing on the famous Thompson groups $F$, $T,$ and $V$ and then generalize to larger groups. The groups \(T\leq \operatorname{Homeo}(S^1)\) and \(V\leq \operatorname{Homeo}(2^\omega)\) in particular were the first known examples of finitely presented infinite simple groups.
See \cite{cannon1996introductory} for an introduction to \(F\), \(T\) and \(V\).

We demonstrate that for $F$ and $T$, the Zariski topology recovers the classical compact-open topology derived from their respective actions on $[0,1]$ and $S^1$. As all the groups in  \cref{mainF} and \cref{mainV} also satisfy the hypothesis of \cite{bardyla2025note} Theorem 1.12, this also provides a new infinite family of groups whose semigroup Zariski topology differs from the group Zariski topology.

\begin{theorem}\label{mainF}
    If \(G\leq \Homeo([0,1])\) is any group containing Thompson's group \(F\) (or any of the Thompson groups \(F_n\) with \(n\geq 2\)), then the group Zariski topology on \(G\) is the compact-open topology. In particular, it is a Hausdorff group topology.
\end{theorem}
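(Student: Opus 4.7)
The plan is to establish both inclusions between the Zariski topology $\mathfrak{Z}_G$ and the compact-open topology $\tau_{\mathrm{co}}$ on $G$. The direction $\mathfrak{Z}_G \subseteq \tau_{\mathrm{co}}$ is immediate, since $\tau_{\mathrm{co}}$ restricts to a Hausdorff group topology on $G$ and the Zariski topology lies below every Hausdorff group topology.

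For the reverse inclusion, recall that $\tau_{\mathrm{co}}$ on $\Homeo([0,1])$ coincides with the topology of uniform convergence, and that elements of $G$ are monotone. It therefore suffices to show that the subbasic sets $V_{x,a} := \{X \in G : X(x) > a\}$ and $V'_{x,a} := \{X \in G : X(x) < a\}$ are Zariski-open for all dyadic rationals $x, a \in (0,1)$. I focus on $V_{x,a}$; the other case is symmetric after replacing the roles of ``left'' and ``right'' supports.

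The heart of the argument is a support-disjointness construction. Given an orientation-preserving $X_0 \in G$ with $X_0(x) > a$, I would produce $h, g \in F$ with $\operatorname{supp}(h) \subseteq [0,x]$, $\operatorname{supp}(g) \subseteq [a,1]$, and $[X_0 h X_0^{-1}, g] \neq 1_G$. The set $U := \{X \in G : [X h X^{-1}, g] \neq 1_G\}$ is then Zariski-open and contains $X_0$. Conversely, for orientation-preserving $X$ with $X(x) \leq a$, the support of $X h X^{-1}$ lies in $[0, X(x)] \subseteq [0,a]$, hence is essentially disjoint from $\operatorname{supp}(g) \subseteq [a,1]$, forcing $[X h X^{-1}, g] = 1_G$; so $U \cap G^+ \subseteq V_{x,a}$, where $G^+ := G \cap \Homeo^+([0,1])$. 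For the construction of $h, g$: pick a dyadic $t_0 \in (X_0^{-1}(a), x)$ and an $h \in F$ supported on $[0,x]$ with $h(t_0) \neq t_0$ and $h(t_0) \in (X_0^{-1}(a), x)$; set $z := X_0(t_0)$ and $z' := X_0(h(t_0))$, which lie in $(a, X_0(x))$ and are distinct. Using density of dyadic rationals and the richness of $F$, choose $g \in F$ supported in $[a,1]$ with $g(z) = z$ and $g(z') \neq z'$; then, writing $H := X_0 h X_0^{-1}$, one checks $Hg(z) = H(z) = z' \neq g(z') = gH(z)$, as required.

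The main obstacle will be the orientation-reversing case: if $G$ contains orientation-reversing elements, then for such $X$ the support of $X h X^{-1}$ equals $X([0,x]) = [X(x), 1]$, so the commutator equation above no longer implies $X(x) > a$, and the same $U$ may contain orientation-reversing elements violating the desired inclusion. I expect to handle this in two steps: first, show that $G^+$ is Zariski-clopen in $G$ by encoding the fact that conjugation by any orientation-reversing element sends an $f \in F$ with $f > \id$ on $(0,1)$ to an orientation-preserving element lying everywhere below the identity, a behavior that should be expressible by a finite combination of support and commutation equations drawn from $F$; then apply the support-disjointness construction separately on $G^+$ and on its complement, with the supports of $h$ and $g$ interchanged (i.e., $h$ supported on $[x,1]$ and $g$ on $[0,a]$) to account for the reversed orientation.
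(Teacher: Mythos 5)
Your route is genuinely different from the paper's: the paper passes to \(\Homeo(S^1)\) (viewing \(\Homeo([0,1])\) as the stabilizer of \(0\)), works only with neighbourhoods of \(1_G\) (using that shifts are \(\mathfrak{Z}_G\)-homeomorphisms), and uses elements of \(F_n\) with prescribed interval supports together with a Lebesgue-measure counting argument to show that finitely many non-commutation conditions trap an element inside a uniform \(\varepsilon\)-ball about the identity; orientation never enters, because the argument directly bounds displacements. Your plan instead shows directly that the evaluation sets \(V_{x,a}\), \(V'_{x,a}\) are Zariski-open. The easy inclusion and your core construction (choosing \(h\) supported in \([0,x]\), \(g\) supported in \([a,1]\) with \([X_0hX_0^{-1},g]\neq 1_G\), and using that \(X(x)\le a\) forces disjoint supports and hence commutation) are correct for orientation-preserving \(X\).

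The genuine gap is exactly the step you flag: you never prove that \(G^+\) is Zariski-clopen, and the mechanism you propose does not obviously exist. A Zariski-basic neighbourhood of \(X_0\) can only impose finitely many conditions of the form \(w(X)\neq 1_G\), so you would need finitely many words, all nontrivial at \(X_0\), such that \emph{every} orientation-reversing element of \(\Homeo([0,1])\) kills at least one of them; the property you invoke ("the conjugate of an \(f>\id\) lies everywhere below the identity") is a pointwise order condition, not visibly a finite combination of equations, and no words are exhibited. Moreover the concrete recipe you give for the reversed branch is the wrong pairing: with \(h\) supported on \([x,1]\) and \(g\) on \([0,a]\), an orientation-reversing \(X\) with \(X(x)\le a\) sends \(\operatorname{supp}(h)\) into \([0,X(x)]\subseteq[0,a]\), i.e.\ exactly where \(g\) lives, so commutation is not forced and the inclusion into \(V_{x,a}\) fails (the correct guarantee there needs \(g\) supported in \([a,1]\), with only \(h\) flipped). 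The gap is repairable by a two-interval version of your own trick: around a given \(X_0\) impose two conditions \([Xh_iX^{-1},g_i]\neq 1_G\), \(i=1,2\), where \(\operatorname{supp}(h_1)\) lies entirely to the left of \(\operatorname{supp}(h_2)\), \(\operatorname{supp}(g_1)\) entirely to the left of \(\operatorname{supp}(g_2)\), and the \(g_i\) are chosen (as in your \(z,z'\) argument) so that \(X_0\) satisfies both; any \(X\) satisfying both must map a point of the left source interval into the left target and a point of the right source interval into the right target, which is impossible for a decreasing map. This makes \(G^+\) and \(G^-\) Zariski-open and completes your scheme, but as written the orientation step is an unproved claim, not a proof.
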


\begin{theorem}\label{mainT}
    If \(G\leq \Homeo(S^1)\) is any group containing Thompson's group \(T\) (or any of the Thompson groups \(T_n\) with \(n\geq 2\)), then the group Zariski topology on \(G\) is the compact-open topology. In particular, it is a Hausdorff group topology.
\end{theorem}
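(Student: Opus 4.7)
The argument has two inclusions between $\mathfrak{Z}_G$ and the compact-open topology $\tau_{\mathrm{co}}$ on $G$. The inclusion $\mathfrak{Z}_G \subseteq \tau_{\mathrm{co}}$ is standard: since $S^1$ is compact Hausdorff, $\tau_{\mathrm{co}}$ is a Hausdorff group topology on $\Homeo(S^1)$ and restricts to one on $G$, and the Zariski topology is contained in every Hausdorff group topology on a group (solution sets of group-word equations are closed in any group topology).

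For $\tau_{\mathrm{co}} \subseteq \mathfrak{Z}_G$, I would show that each sub-basic compact-open set $N(K, U) = \{x \in G : x(K) \subseteq U\}$ (with $K \subseteq S^1$ compact and $U \subseteq S^1$ open) is Zariski-open. Fix $x_0 \in N(K, U)$; the task is to produce a basic Zariski-open neighborhood of $x_0$ contained in $N(K, U)$. Cover $C := S^1 \setminus U$ by finitely many small closed dyadic arcs $\overline{V_1}, \ldots, \overline{V_n}$ and $K$ by closed dyadic arcs $\overline{W_1}, \ldots, \overline{W_m}$ so that $x_0(\overline{W_j}) \cap \overline{V_i} = \emptyset$ for all $i, j$, which is possible by uniform continuity of $x_0$ together with density of dyadic rationals. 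It then suffices to construct, for each pair $(i, j)$, a word $w_{ij}(x)$ with parameters in $T$ satisfying (a) $w_{ij}(x_0) \neq 1$ and (b) $w_{ij}(x) = 1$ for every $x \in G$ with $x(\overline{W_j}) \cap \overline{V_i} \neq \emptyset$; then $\bigcap_{i, j} \{x : w_{ij}(x) \neq 1\}$ is the required basic Zariski-open neighborhood.

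The main obstacle is constructing the $w_{ij}$. The naive commutator $[t_i, x s_j x^{-1}]$ with $\mathrm{supp}(t_i) \subseteq V_i$ and $\mathrm{supp}(s_j) \subseteq W_j$ fails both (a) and (b), since disjoint supports imply commuting, and so the word vanishes in exactly the wrong direction, including at $x = x_0$. A plan that avoids this pitfall is to reduce to \cref{mainF}: pick a dyadic rational $p \notin K \cup x_0(K) \cup C$, so that $S^1 \setminus \{p\}$ is homeomorphic to an open interval on which $\mathrm{Stab}_T(p) \cong F$ acts. Applying \cref{mainF} to the subgroup $\mathrm{Stab}_G(p)$ acting on this arc produces equations in $F$-parameters carving out the desired Zariski neighborhood of $x_0$ inside $\mathrm{Stab}_G(p)$. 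Extending to $x \in G$ with $x(p) \neq p$ is the chief technical step: one must algebraically encode the ``straightening'' of $x(p)$ to $p$ using rotation-like generators of $T$, which act transitively on dyadic rationals of $S^1$. The full construction combines the $F$-level equations from the proof of \cref{mainF} with this transitive $T$-action, and assembling this into a finite family of group-word equations in the single variable $x$ is where the bulk of the argument lies.
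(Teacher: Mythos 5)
Your first inclusion is fine, and you have correctly located the difficulty, but the proposal does not contain the idea that actually carries the proof, and the two ingredients you do propose both have genuine problems. First, your requirement (b) asks for words $w_{ij}$ that vanish on \emph{all} $x$ with $x(\overline{W_j}) \cap \overline{V_i} \neq \emptyset$ and survive at $x_0$: this is a disjointness certificate, and word inequations naturally certify the opposite. The only robust implication available is ``disjoint supports $\Rightarrow$ commute,'' so non-vanishing of a commutator-type word in $x$ with parameters in $T$ certifies that an image of a support \emph{meets} a prescribed set; no construction in your sketch produces words whose non-vanishing certifies that $x(\overline{W_j})$ \emph{misses} $\overline{V_i}$, and you give no candidate beyond noting that the naive one fails. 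Second, the proposed rescue via \cref{mainF} does not transfer: applying \cref{mainF} to $\operatorname{Stab}_G(p)$ only yields information about the Zariski topology \emph{of the subgroup} $\operatorname{Stab}_G(p)$, which is neither Zariski-open in $G$ nor a source of Zariski-open sets of $G$ (subbasic sets of $\mathfrak{Z}_{\operatorname{Stab}_G(p)}$ need not be traces of sets in $\mathfrak{Z}_G$, and in any case say nothing about elements moving $p$). You yourself flag the extension to $x$ with $x(p)\neq p$ as ``the chief technical step,'' but that step is exactly the whole problem, and it is not done. (In the paper, \cref{mainF} is itself deduced from the same lemma that proves the present theorem, so this reduction would also be circular relative to the paper's logic, though you could not have known that.)

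For comparison, the paper's route avoids disjointness certificates entirely. Since shifts are $\mathfrak{Z}_G$-homeomorphisms and the compact-open topology on $\operatorname{Homeo}(S^1)$ is the uniform-metric group topology, it suffices to trap a Zariski neighbourhood of $1_G$ inside each uniform ball $B_{d_\infty}(1_G,\varepsilon)$. One takes a fine dyadic partition $p_0<p_1<\dots<p_k$ of the circle, elements $t_{p_i,p_{i+1}}\in T_n$ supported exactly on $(p_i,p_{i+1})$, and small ``test'' elements $t_{p_{i,l},p_{i,r}}$ supported on tiny intervals around each $p_i$; the Zariski-open set is $\{x : [t_{p_i,p_{i+1}}^{x}, t_{p_{i,l},p_{i,r}}]\neq 1_G \text{ and } [t_{p_i,p_{i+1}}^{x}, t_{p_{i+1,l},p_{i+1,r}}]\neq 1_G\}$. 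Non-vanishing of these commutators only certifies that each image interval $(p_i,p_{i+1})x$ \emph{meets} the small intervals near both endpoints; the disjointness-type control you were seeking is then extracted globally, not word-by-word: the images $(p_i,p_{i+1})x$ are pairwise disjoint intervals of total Lebesgue measure $1$, so once each is forced to have measure at least roughly $p_{i+1}-p_i$, none can have measure much larger, and a short estimate gives $d(x,(x)g)<\varepsilon$ pointwise. This global measure-counting step is the missing idea in your proposal.
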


This is perhaps not so surprising, as by a theorem of Rubin (see \cite{rubin1989reconstruction,belk2025short}) the actions of the groups \(F\), \(T\), \(V\) are algebraically recoverable.
However, perhaps more surprisingly, we find a very different outcome for $V$. 
We prove that the Zariski topology on $V$ is irreducible (so in particular not Hausdorff). 
It follows that the techniques used in Rubin's theorem are not suited to handling Zariski topologies.
As \(V\) is closely tied to symmetric groups, this result also contrasts with a result of  \cite{banakh2012algebraically} which says the Zariski topology on any permutation group containing the finite support elements is Hausdorff.

\begin{theorem}\label{mainV}
    The group Zariski topologies on Thompson's group  \(V\) (or any of the Thompson groups \(V_n\) with \(n\geq 2\)) and the group \(\operatorname{Homeo}(2^\omega)\) are irreducible. In particular, these are neither Hausdorff nor group topologies.
\end{theorem}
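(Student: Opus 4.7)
The plan is to reduce irreducibility to the combinatorial claim that, for any finite collection of nontrivial equations $w_1(x)=1,\dots,w_n(x)=1$ (that is, each admits some non-solution), there is a common non-solution in the ambient group. This rules out writing the group as a finite union of proper Zariski-closed sets. Combined with the standard facts that the Zariski topology is always $T_1$ while every $T_1$ group topology is Hausdorff, establishing irreducibility on an infinite group at once gives non-Hausdorffness and rules out being a group topology.

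I would work primarily in $\operatorname{Homeo}(2^\omega)$ equipped with its compact-open topology. Here the evaluation map $x \mapsto g_0 x^{k_0} g_1 \cdots g_l$ is continuous, so every solution set is closed. The \emph{key lemma} to establish is: for any word $w(x) = g_0 x^{k_0} g_1 \cdots g_l$ with $g_i \in \operatorname{Homeo}(2^\omega)$, if $w$ admits at least one non-solution, then $\{x : w(x)=1\}$ is nowhere dense in the compact-open topology. Granted this, the finite union $\bigcup_i \{x : w_i(x) = 1\}$ is a finite union of closed nowhere-dense sets, which is again nowhere dense in any topological space (no Baire category is needed), so its complement is a nonempty open subset of $\operatorname{Homeo}(2^\omega)$. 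Any element of this complement is a common non-solution, which handles the $\operatorname{Homeo}(2^\omega)$ case of the theorem outright.

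To prove the key lemma, I would argue by contradiction using the homogeneity of group topologies. If the solution set of a nontrivial $w$ had nonempty interior, then left translation produces a word $w'(y) = w(yx_0)$ vanishing identically on an open neighborhood of the identity. In $\operatorname{Homeo}(2^\omega)$ every such neighborhood contains an open subgroup of the form $H_{\mathcal{P}} = \{y : y(P)=P \text{ for all } P \in \mathcal{P}\}$ corresponding to some clopen partition $\mathcal{P}$ of $2^\omega$, and $H_{\mathcal{P}} \cong \prod_{P\in\mathcal{P}} \operatorname{Homeo}(P)$ is algebraically very rich. The hard part of the argument is deriving a contradiction from the assumption that a nontrivial word can vanish identically on such a large open subgroup; the approach I have in mind is to vary block-diagonal elements $y \in H_{\mathcal{P}}$ independently across different blocks and track how $w'(y)$ acts on carefully chosen test points of $2^\omega$, forcing enough constraints on the parameters of $w'$ to conclude that $w'$, and hence $w$, must have been trivial. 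I expect this step to be the main technical obstacle.

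Finally, to transfer the result from $\operatorname{Homeo}(2^\omega)$ to $V$ and $V_n$, I would use that elements of $V$ (respectively $V_n$) are dense in $\operatorname{Homeo}(2^\omega)$ in the compact-open topology, since every homeomorphism of $2^\omega$ is approximable to arbitrary precision by a prefix-substitution homeomorphism on a sufficiently fine clopen partition. Because a dense subset meets every nonempty open set, the dense open set of common non-solutions in $\operatorname{Homeo}(2^\omega)$ meets $V$ (and likewise $V_n$), yielding a common non-solution inside the smaller group. The passage from irreducibility to non-Hausdorffness and failure of being a group topology then follows formally as above, completing all three cases of the theorem.
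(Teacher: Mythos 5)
There is a genuine gap, and it is located exactly where you anticipate the ``main technical obstacle'': your key lemma is false. In \(\operatorname{Homeo}(2^\omega)\) a word with constants can admit non-solutions and nevertheless vanish identically on an open subgroup, so its solution set need not be nowhere dense in the compact-open topology. Concretely, take the clopen sets \(P_1=[0]\) and \(P_2=[1]\) (cylinders on the first letter), let \(g\neq 1\) be supported in \(P_1\) and \(h\neq 1\) be supported in \(P_2\), and consider the word \(w(x)=[x^{-1}gx,\,h]=x^{-1}g^{-1}xh^{-1}x^{-1}gxh\). For every \(x\) in the open subgroup of homeomorphisms preserving \(P_1\) setwise (which contains the partition stabilizer \(H_{\{P_1,P_2\}}\)), the conjugate \(x^{-1}gx\) is supported in \(P_1\), hence commutes with \(h\), so \(w\equiv 1\) on that open subgroup and the solution set has nonempty interior. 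Yet \(w\) does have non-solutions: if \([x^{-1}gx,h]=1\) for all \(x\), the normal closure of \(g\) would centralize \(h\); since \(\operatorname{Homeo}(2^\omega)\) is simple (Anderson) with trivial center, this forces \(h=1\), a contradiction. The same example can be arranged inside \(V\). So the planned reduction --- ``nonempty interior of the solution set \(\Rightarrow\) the word vanishes on an open subgroup \(\Rightarrow\) the word is trivial'' --- cannot be completed, because the middle situation genuinely occurs for nontrivial words; consequently the finite-union-of-nowhere-dense-sets argument collapses, and with it the density transfer to \(V\) and \(V_n\) (which was otherwise a reasonable idea, as \(V_n\) is indeed dense in \(\operatorname{Homeo}(2^\omega)\)).

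For comparison, the paper does not argue through the compact-open topology at all. It proves a purely combinatorial lemma: if \(G\) is a highly transitive permutation group on an infinite set in which every nontrivial element has infinite support, then any finite system of inequations \(w_i(x)\neq 1\) of the normalized form has a solution, produced by inductively building a finite partial permutation that pushes designated points \(p_i\) through the words into ``fresh'' territory and then extending it to an element of \(G\) by high transitivity; irreducibility follows because the basic Zariski-open sets are closed under finite intersection. \(\operatorname{Homeo}(2^\omega)\) satisfies the hypotheses directly, while \(V_n\) (which is not highly transitive on \(2^\omega\)) is handled via its faithful, highly transitive action on a single orbit, which has no isolated points. If you want to salvage your outline, you would have to replace the nowhere-density lemma by some such direct construction of a common non-solution; note that your counterexample above shows the non-solution sets, while Zariski-dense, are genuinely not dense in the compact-open topology, so no Baire-type shortcut is available.
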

In proving \cref{mainV}, we develop the following more general theorem of independent interest.
\begin{theorem}\label{maingeneral}
    Suppose that \(X\) is a Hausdorff topological space with no isolated points and \(G\) acts highly transitively on \(X\) by homeomorphisms.
    In this case, the group Zariski topology on \(G\) is irreducible.
\end{theorem}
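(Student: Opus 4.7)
The plan is to verify that any two nonempty open sets in $\mathfrak{Z}_G$ intersect. Every nonempty Zariski-open set contains a finite intersection of subbasic opens $U_w := \{x \in G : w(x) \neq 1_G\}$, where $w(x) = g_0 x^{k_0} g_1 x^{k_1} \cdots g_{l-1} x^{k_{l-1}} g_l$, so it is enough to show: given words $w_1, \ldots, w_N$ with each $U_{w_i}$ nonempty, there exists a single $x \in G$ lying in all of $U_{w_1}, \ldots, U_{w_N}$.

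For each $i$, I would fix a witness $x_i \in G$ with $w_i(x_i) \neq 1_G$. Since $w_i(x_i)$ is a non-identity homeomorphism of the Hausdorff space $X$, the set $O_i := \{p \in X : w_i(x_i)(p) \neq p\}$ is nonempty and open. Given $p \in O_i$, the right-to-left evaluation of $w_i(x_i)(p)$ traces out a finite sequence $p = r_0^{(i)}(p), r_1^{(i)}(p), \ldots, r_{M_i}^{(i)}(p) = w_i(x_i)(p)$, in which each $r_\mu^{(i)} : X \to X$ is a composition of the fixed homeomorphisms $g_j^{(i)}$ and $x_i^{\pm 1}$ and is therefore itself a homeomorphism of $X$. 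At each step where $x_i$ (resp.\ $x_i^{-1}$) is applied I record the pair $(r_\mu^{(i)}(p), r_{\mu+1}^{(i)}(p))$ (resp.\ its reverse) as a constraint ``$x$ sends the first coordinate to the second.'' The resulting finite constraint set $C_i(p)$ is, by construction, consistent with the bijection $x_i$, and any $x \in G$ extending $C_i(p)$ satisfies $w_i(x)(p) = w_i(x_i)(p) \neq p$, hence $w_i(x) \neq 1_G$.

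The main step is to choose $p_1 \in O_1, \ldots, p_N \in O_N$ inductively so that the combined relation $C := C_1(p_1) \cup \cdots \cup C_N(p_N)$ remains a consistent partial injection on $X$ (no two pairs share a first coordinate while having distinct second coordinates, and vice versa). At stage $i$, a ``bad'' $p_i \in O_i$ is one at which some pair in $C_i(p_i)$ clashes with a previously fixed pair; each such clash forces an equation of the shape $f(p_i) = c$ where $c \in X$ is fixed and $f$ is one of the homeomorphisms $r_\mu^{(i)}$ (or $x_i \circ r_\mu^{(i)}$). The solution set is the singleton $\{f^{-1}(c)\}$, and because $X$ has no isolated points this singleton has empty interior. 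The bad set is therefore a finite union of closed empty-interior subsets of $X$, which itself has empty interior, so $O_i \setminus (\text{bad set})$ is a nonempty open subset of $O_i$ from which $p_i$ may be selected.

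Once the induction terminates, $C$ is a consistent finite partial injection on $X$, and high transitivity of $G$ on $X$ produces $x \in G$ extending $C$; by construction $w_i(x)(p_i) = w_i(x_i)(p_i) \neq p_i$ for each $i$, so $x \in \bigcap_i U_{w_i}$ and irreducibility follows. The main obstacle is exactly the inductive step: one must verify that each clash condition cuts out a set with empty interior, and this is precisely the place where the two hypotheses enter the argument — that the trajectory functions $r_\mu^{(i)}$ are genuine homeomorphisms (so each clash becomes a single-point equation) and that $X$ has no isolated points (so each such singleton indeed has empty interior).
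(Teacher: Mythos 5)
Your proposal is correct, but it takes a genuinely different route from the paper. The paper first establishes a purely combinatorial lemma (\cref{main}): for a highly transitive group of permutations of an infinite set in which every non-identity element has infinite support, the Zariski topology is irreducible. That proof is witness-free: it builds a finite partial permutation from scratch, advancing the evaluation of each word one letter at a time into ``fresh'' points, and uses the infinite support of the interior coefficients \(a_{i,j}\) to keep the final value of each word away from its starting point; \cref{maingeneral} then follows because supports of homeomorphisms of a Hausdorff space are open, hence infinite when there are no isolated points. You instead work directly from the topological hypotheses: you take the nonemptiness of each subbasic set as an input (which is indeed all that irreducibility requires), borrow the whole evaluation trajectory from a witness \(x_i\), and only need to place the base point \(p_i\) generically so that the finitely many graph constraints coming from the different words remain a single finite partial injection; here Hausdorffness makes each clash locus a closed singleton and the absence of isolated points gives it empty interior, and high transitivity extends the constraint set to an element of \(G\), just as in the paper. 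Your bookkeeping of the two clash types (shared first coordinate, shared second coordinate, each reducing to one equation \(f(p_i)=c\) with \(f\) a homeomorphism) is sound, and the reduction from arbitrary nonempty open sets to finitely many nonempty subbasic sets is handled correctly. What the paper's longer route buys is the stronger, independently useful \cref{main}: it shows the normalized basic sets are themselves nonempty, with no topology on \(X\) at all, which is the form the paper also invokes for \(V_n\) acting on an orbit; your argument is shorter but yields only the irreducibility statement, since nonemptiness of the subbasic sets is assumed rather than proved. (Like the paper, you implicitly use faithfulness of the action when passing from \(w_i(x_i)\neq 1_G\) to ``\(w_i(x_i)\) moves a point of \(X\)''.)
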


 We conclude by classifying the connected manifolds $M$ for which $\operatorname{Homeo}(M)$ supports a Hausdorff Zariski topology.
In this paper, by an \(n\)-manifold, we mean a second countable Hausdorff space such that every point has an open neighbourhood homeomorphic to \(\mathbb{R}^n\). In particular, we do not consider manifolds with boundary.
\begin{theorem}\label{mainManifold}
    Suppose that \(n\in \N\) and \(M\) is an \(n\)-manifold with exactly one connected component. Then the following are equivalent:
    \begin{enumerate}
        \item the group Zariski topology on \(\operatorname{Homeo}(M)\) is Hausdorff,
        \item the group Zariski topology on \(\operatorname{Homeo}(M)\) is a group topology,
        \item \(n\leq 1\).
    \end{enumerate}
\end{theorem}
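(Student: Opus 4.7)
The plan is to establish (3)$\Rightarrow$(2)$\Rightarrow$(1)$\Rightarrow$(3). The middle implication is essentially free: the Zariski topology $\mathfrak{Z}_G$ on any group is always $T_1$, and a $T_0$ group topology is automatically Hausdorff (by a standard argument using continuity of multiplication at the identity), so (2) immediately implies (1).

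For (3)$\Rightarrow$(2), I split on $n$. When $n=0$, the only connected such manifold is a single point, so $\operatorname{Homeo}(M)$ is trivial and the claim is vacuous. When $n=1$, the connected $1$-manifolds (second countable, Hausdorff, without boundary) are $\mathbb{R}$ and $S^1$. The case $M=S^1$ is immediate from \cref{mainT}, since $\operatorname{Homeo}(S^1)\supseteq T$. For $M=\mathbb{R}$, I would adapt the proof of \cref{mainF}: identify $\mathbb{R}$ with the open interval $(0,1)$ so that $F$ embeds into $\operatorname{Homeo}(\mathbb{R})$, then rerun the argument of \cref{mainF} using that the compact-open topology on $\operatorname{Homeo}(\mathbb{R})$ is again a Hausdorff group topology and that every compact subset of $\mathbb{R}$ lies in a proper closed subinterval on which $F$ acts appropriately. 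The main point is that the witnesses used in \cref{mainF} for opening up basic compact-open sets via group words only depend on local behavior of $F$ on compact subintervals, which transfers to $\mathbb{R}$.

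For (1)$\Rightarrow$(3), I argue the contrapositive: suppose $n\geq 2$. Then $M$ is connected Hausdorff with no isolated points (any manifold of positive dimension has none). The classical theorem that $\operatorname{Homeo}(M)$ acts $k$-transitively on $M$ for every $k$ — proved by moving points one at a time along paths and thickening to compactly supported isotopies, using that dimension $\geq 2$ allows one to route each path around a given finite set — shows that the action is highly transitive. Then \cref{maingeneral} applies and the Zariski topology on $\operatorname{Homeo}(M)$ is irreducible, hence not Hausdorff, contradicting (1).

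The main obstacle is the (3)$\Rightarrow$(2) step for $M=\mathbb{R}$, since \cref{mainF} is stated only for $\operatorname{Homeo}([0,1])$. I expect its proof to adapt with care — the delicate issue being that in the compact-open topology on $\operatorname{Homeo}(\mathbb{R})$ one must exhaust $\mathbb{R}$ by compact sets and verify that each basic open neighborhood of the identity can be recognized by a Zariski-open condition built from elements of $F$ (or of $G$ itself) supported on a sufficiently large compact subinterval. The high-transitivity invocation in (1)$\Rightarrow$(3), while classical, also deserves a careful citation or short self-contained proof to avoid hiding a nontrivial lemma.
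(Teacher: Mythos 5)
Your implications (2)$\Rightarrow$(1) and (1)$\Rightarrow$(3) are essentially the paper's: the Zariski topology is always $T_1$ and a $T_1$ group topology is Hausdorff; and for $n\geq 2$ the group $\operatorname{Homeo}(M)$ is highly transitive (the paper simply cites this as \cref{hightrans} rather than re-sketching the isotopy argument), $M$ is Hausdorff with no isolated points, so \cref{maingeneral} gives irreducibility and hence non-Hausdorffness. Your handling of $n=0$ and of $M=S^1$ via \cref{mainT} also matches.

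The genuine gap is the case $M=\mathbb{R}$ in (3)$\Rightarrow$(2). You propose to ``rerun the argument of \cref{mainF}'' for $\operatorname{Homeo}(\mathbb{R})$ with its compact-open topology, and you leave this as an expectation rather than an argument. As written this is not a proof: the engine behind \cref{mainF} is \cref{mainFTlemma}, a statement about subgroups of $\operatorname{Homeo}(S^1)$ whose proof leans on compactness of the circle (the Lebesgue-measure bookkeeping with total measure $1$), so transferring it to the non-compact line is not a routine local adaptation, and you give no details of how a compact exhaustion would be recognized by Zariski-open conditions. The paper sidesteps all of this with a one-line observation you missed: the Zariski topology is defined purely from the group operations, hence is an invariant of the abstract group, and $\operatorname{Homeo}(\mathbb{R})\cong\operatorname{Homeo}((0,1))\cong\operatorname{Homeo}([0,1])$ because every homeomorphism of the open interval is monotone and extends uniquely to the closed interval. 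Thus \cref{mainF} applies verbatim and shows the Zariski topology on $\operatorname{Homeo}(\mathbb{R})$ is a Hausdorff group topology, with no new analysis of $\operatorname{Homeo}(\mathbb{R})$ needed. (Your guess that this topology is the compact-open topology of the action on $\mathbb{R}$ happens to be true, since for homeomorphisms of $(0,1)$ pointwise closeness to the identity on a large compact subinterval forces uniform closeness, but you neither need nor establish this.) With that replacement, your outline coincides with the paper's proof.
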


\section{Preliminaries}
Here we give the standard definitions, conventions and facts used throughout the paper.
In this paper, we use right actions and compose functions from left to right. We also adopt the convention that \(0\in \N\).
We begin with the definition of the Zariski topology.
\begin{definition}[Zariski Topology]\label{zariskidef}
    Suppose that \(G\) is a group with identity \(1_G\). Then the group Zariski topology \(\mathfrak{Z}_G\) is the topology with subbasis consisting of the sets
    \[\{x \in G \mid 1_G \neq g_0 x^{k_0} g_1 \dots g_{l-1} x^{k_{l-1}} g_l\}\]
    where \(l\in \N\), \(g_0,\ldots,g_{l}\in G\) and \(k_0,\ldots, k_{l-1}\in \Z\).
\end{definition}

The following is perhaps the most standard topology to consider for a homeomorphism group. We will show that in many cases it is the smallest Hausdorff topology.

\begin{definition}[Compact-open topology]
Suppose that \(X\) is a topological space. Then the compact-open topology on \(\operatorname{Homeo}(X)\) is the topology with subbasis consisting of the sets
\[\makeset{f\in \operatorname{Homeo}(X)}{\((K)f\subseteq U\)}\]
where \(K\subseteq X\) is compact and \(U\subseteq X\) is open.
\end{definition}

This definition of the compact-open topology, while standard, is not always the easiest to use.
Consequently, we provide an alternative characterization.
The following fact is well-known 
(see for example Theorem 46.8 of \cite{munkres2000topology} and section 9B example 8 of \cite{kechris1995classical}).
\begin{prop}\label{compact-open-good}
If \((X,d)\) is a compact metric space then \(\operatorname{Homeo}(X)\) is a topological group with the compact-open topology. 
Moreover this topology is induced by the metric
\[d_\infty(f,g)=\sup_{x\in X} d((x)f,(x)g).\]
\end{prop}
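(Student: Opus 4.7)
The plan is to prove two independent statements: (i) the $d_\infty$-metric topology coincides with the compact-open topology on $\operatorname{Homeo}(X)$; and (ii) composition and inversion are continuous with respect to $d_\infty$.

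For (i), one direction is routine: given a compact-open subbasic set $V = \{g : (K)g \subseteq U\}$ containing $f$, the compact set $(K)f$ sits inside the open set $U$, so there is some $\varepsilon > 0$ whose $d_\infty$-ball around $f$ lies in $V$. The reverse direction is the main geometric content. I would fix $f$ and $\varepsilon > 0$, use compactness of $X$ together with uniform continuity of $f$ to cover $X$ by finitely many open sets of small diameter whose $f$-images have diameter at most $\varepsilon/4$, pass to their closures $K_1, \dots, K_n$, and let $U_i$ be the open $\varepsilon/2$-neighbourhood of each compact set $(K_i)f$. The compact-open set $\bigcap_i\{g : (K_i)g \subseteq U_i\}$ then contains $f$ and sits inside the $d_\infty$-ball $B_\varepsilon(f)$: for any $g$ in that intersection and any $x \in X$, pick $i$ with $x \in K_i$ and $y \in K_i$ with $d((y)f, (x)g) < \varepsilon/2$, and combine this with $d((x)f, (y)f) < \varepsilon/4$.

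For (ii), continuity of composition at $(f, g)$ follows by splitting
$$d(((x)f)g, ((x)f')g') \leq d(((x)f)g, ((x)f')g) + d(((x)f')g, ((x)f')g'),$$
and controlling the first summand by uniform continuity of $g$ and the second by $d_\infty(g, g')$. Continuity of inversion at $f$ is the subtle step: given $\varepsilon > 0$, choose $\delta > 0$ from uniform continuity of $f^{-1}$ so that $d(a,b) < \delta$ forces $d((a)f^{-1}, (b)f^{-1}) < \varepsilon$. Then for any $f'$ with $d_\infty(f, f') < \delta$ and any $y \in X$, writing $x = (y)f^{-1}$ and $z = (y)f'^{-1}$ gives $(x)f = y = (z)f'$, whence $d((x)f, (z)f) = d((z)f', (z)f) \leq d_\infty(f, f') < \delta$; applying $f^{-1}$ to both sides produces $d(x, z) < \varepsilon$, which is the desired bound on $d_\infty(f^{-1}, f'^{-1})$.

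The main obstacle is continuity of inversion. The argument crucially uses uniform continuity of $f^{-1}$, and this is exactly the place where the hypothesis of compactness of $X$ is indispensable: without it, a small perturbation of $f$ could send a point very far along a large swath of $X$ that $f^{-1}$ stretches, and no uniform modulus of continuity for $f^{-1}$ need exist.
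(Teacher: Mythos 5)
Your proof is correct: the paper does not prove \cref{compact-open-good} at all, citing Munkres and Kechris instead, and your argument is essentially the standard one found in those references (compactness plus uniform continuity to match the compact-open and sup-metric topologies, and uniform continuity of $f^{-1}$ for continuity of inversion). No gaps worth noting beyond the routine verification that finite intersections of subbasic compact-open sets are handled by taking a minimum of the $\varepsilon$'s, which your ``routine'' direction implicitly covers.
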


Our results on non-Hausdorff topologies in this paper actually show a much stronger condition. This condition prevents any pairs of non-empty disjoint open sets from existing at all.
\begin{definition}[Irreducible]
    A topological space \(X\) is said to be \emph{irreducible} (also known as hyperconnected) if whenever \(K_1,K_2\subsetneq X\) are closed, we have \(K_1\cup K_2 \neq X\).
    Equivalently, all non-empty open subsets of \(X\) are dense.
\end{definition}

\begin{definition}[Support]
    If \(g\) is a permutation of a set \(X\), then the support of \(g\) is the set
    \(\makeset{x\in X}{\((x)g\neq x\)}.\)
\end{definition}
If \(f\) and \(g\) are permutations of a common set, then we denote by \(f^g\) the conjugate \(g^{-1}fg\) of \(f\) by \(g\).
Crucially, the support of \(f^g\) is always the image of the support of \(f\) under the map \(g\).
\begin{definition}[Highly Transitive]
    We say that a group \(G\) of permutations of a set \(X\) is \emph{highly transitive} if for every bijection \(\sigma\) between finite subsets of \(X\), there is \(g\in G\) extending \(\sigma\).
\end{definition}

The Thompson groups $F_n$, $T_n$ and $V_n$ are the homeomorphism groups defined as follows:
\begin{itemize}
    \item $F_n$ is the group of piecewise linear homeomorphisms of $[0,1]$ that are orientation-preserving, have all slopes in the set $\{n^k \mid k \in \mathbb{Z}\}$ and have breakpoints in the $n$-adic rationals $\mathbb{Z}[\frac{1}{n}]$.
    \item $T_n$ is the group of piecewise linear homeomorphisms of the circle $S^1 \cong \mathbb{R}/\mathbb{Z}$ satisfying the same slope and breakpoint conditions as $F_n$.
    \item $V_n$ is the group of homeomorphisms \(f\) of the Cantor space $\{0, 1, \dots, n-1\}^\omega$ that are piecewise defined by prefix replacements. That is to say, for all \(x=x_0x_1x_2\ldots\in \{0, 1, \dots, n-1\}^\omega\), there are \(i,j\in \N\) and \(y_0y_1\ldots y_j\in \{0, 1, \dots, n-1\}^j\) with \((x_0x_1x_2\ldots x_iz_0z_1\ldots)f=y_0y_1y_2\ldots y_jz_0z_1\ldots\) for all \(z_0z_1\ldots\in \{0, 1, \dots, n-1\}^\omega\).
\end{itemize}

Finally, in the manifolds section, we will need a few standard facts about manifolds. 
The first can be found in \cite{mann2021structure}, while the second is the primary focus of \cite{gale1987classification}.
\begin{prop}\label{hightrans}
    If \(n\in \N\backslash \{0,1\}\) and \(M\) is an \(n\)-manifold with exactly one connected component, then \(\operatorname{Homeo}(M)\) is highly transitive. 
\end{prop}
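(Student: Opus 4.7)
The plan is to prove that $\operatorname{Homeo}(M)$ acts $k$-transitively on $M$ for every $k \in \N$, from which high transitivity follows immediately. I would proceed by induction on $k$. For the base case $k = 1$, the argument is that any two points lying in a single coordinate ball $U \cong \mathbb{R}^n$ can be exchanged by a self-homeomorphism of $\mathbb{R}^n$ supported in a compact subset of $U$, which extends by the identity to a self-homeomorphism of $M$. Consequently each $\operatorname{Homeo}(M)$-orbit is open, and connectedness of $M$ forces there to be a single orbit.

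The inductive step reduces to the following key lemma: whenever $F \subseteq M$ is finite and $p, q \in M \setminus F$, there exists $h \in \operatorname{Homeo}(M)$ with $(p)h = q$ and $h$ fixing $F$ pointwise. Given this lemma, to extend a partial map $(p_1, \ldots, p_k) \mapsto (q_1, \ldots, q_k)$, I would first invoke the inductive hypothesis to obtain $g \in \operatorname{Homeo}(M)$ sending $p_i$ to $q_i$ for $i < k$, then apply the lemma with $F = \{q_1, \ldots, q_{k-1}\}$ to move $(p_k)g$ to $q_k$, and compose.

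To prove the lemma I would use that $n \geq 2$ implies $M \setminus F$ is connected—a consequence of $\mathbb{R}^n \setminus \{0\}$ being connected for $n \geq 2$ combined with a short induction on $|F|$—and hence path-connected by local path-connectedness of the manifold. I would then fix a path $\gamma$ from $p$ to $q$ in $M \setminus F$, cover its compact image by finitely many coordinate balls $B_1, \ldots, B_m$ all disjoint from $F$, select intermediate points $p = x_0, x_1, \ldots, x_m = q$ with $\{x_{i-1}, x_i\} \subseteq B_i$, and in each $B_i$ produce a self-homeomorphism of $M$ supported in a compact subset of $B_i$ (hence fixing $F$) that sends $x_{i-1}$ to $x_i$. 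Composing these $m$ homeomorphisms yields the required $h$.

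The main technical ingredient, and the only piece requiring real construction, is the single-ball step: building a compactly supported self-homeomorphism of $\mathbb{R}^n$ sending one prescribed point to another. This is standard—for instance, one can take the time-$1$ flow of a smooth, compactly supported vector field that equals a constant on a neighborhood of the line segment between the two points—but it is where the real content of the argument sits. The hypothesis $n \geq 2$ enters exactly once, in the connectedness of $M \setminus F$, and is essential, since removing a point from a $1$-manifold already disconnects it.
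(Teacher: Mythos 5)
Your proof is correct. Note that the paper does not actually prove \cref{hightrans}: it is quoted as a known fact with a citation to \cite{mann2021structure}, so there is no in-paper argument to compare against; what you have written is the standard self-contained proof of homogeneity of connected manifolds of dimension at least two. Your reduction is sound: $k$-transitivity for all $k$ is exactly the paper's notion of high transitivity, the induction via the key lemma (move $(p_k)g$ to $q_k$ while fixing $F=\{q_1,\dots,q_{k-1}\}$) works because injectivity of $g$ keeps $(p_k)g$ outside $F$, and the lemma itself follows from connectedness of $M\setminus F$ (valid since $n\ge 2$), path-connectedness via local path-connectedness, the chain-of-balls argument along a path (strictly speaking one should invoke a Lebesgue-number argument to get consecutive points $x_{i-1},x_i$ in a common ball, but this is routine), and the compactly supported ``push'' homeomorphism of $\mathbb{R}^n$ moving one prescribed point to another. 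One cosmetic quibble: your closing remark that the hypothesis is essential ``since removing a point from a $1$-manifold already disconnects it'' is literally true only for $\mathbb{R}$, not $S^1$ (where one instead uses that homeomorphisms preserve or reverse the cyclic order, cf.\ \cref{mainT}); this does not affect the proof, which assumes $n\ge 2$ throughout.
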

\begin{prop}\label{1manclass}
    If \(M\) is a \(1\)-manifold with exactly one connected component, then \(M\) is either homeomorphic to \( \mathbb{R}\) or \(S^1\cong \mathbb{R}/\mathbb{Z}\).
\end{prop}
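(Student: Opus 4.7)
As stated, the proposition is false without an implicit connectedness assumption (a disjoint union of two copies of $\mathbb R$ is a $1$-manifold homeomorphic to neither $\mathbb R$ nor $S^1$); this is clearly the intended reading, given the way the result is applied in the proof of \cref{mainManifold}. My plan is a chart-gluing argument that classifies connected $1$-manifolds, from which the general statement follows componentwise.

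First I would use second countability together with the local Euclidean hypothesis to cover $M$ by a countable collection of charts $\phi_i : U_i \to I_i$ onto open intervals $I_i \subseteq \mathbb R$. After refinement and discarding redundant sets, I may enumerate these so that each $U_n$ intersects the preceding union $U_0 \cup \cdots \cup U_{n-1}$: the set of points reachable from $U_0$ by a finite chain of overlapping charts is both open and closed in $M$, hence equal to $M$ by connectedness.

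The crux is the following gluing lemma: if $U, V \subseteq M$ are open, each homeomorphic to an open interval of $\mathbb R$, with $U \cap V \neq \emptyset$, then $U \cup V$ is homeomorphic to either an open interval of $\mathbb R$ or to $S^1$, and in the $S^1$ case $U \cup V$ is clopen in $M$. To prove this I would examine the transition homeomorphism $\tau = \phi_V \circ \phi_U^{-1}$, a homeomorphism between the open sets $\phi_U(U \cap V) \subseteq I_U$ and $\phi_V(U \cap V) \subseteq I_V$; both are disjoint unions of open subintervals, and $\tau$ is monotonic on each component. Hausdorffness forces $U \cap V$ to have at most two connected components, since three or more would force a boundary point of one component of $\phi_U(U \cap V)$ inside $I_U$ to map, via limits of points in a neighbouring component under $\tau$, to a point of $I_V$ inside $\phi_V(U \cap V)$, producing two points of $M$ that cannot be separated. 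In the one-component case $U$ and $V$ glue along a common subinterval into a single longer open interval; in the two-component case both ends of $U$ are identified with both ends of $V$, forcing $U \cup V \cong S^1$ and closed.

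Finally, applying the lemma inductively to $W_n := U_0 \cup \cdots \cup U_n$, either some $W_n$ is homeomorphic to $S^1$ and hence, being clopen in $M$, equals $M$; or every $W_n$ is homeomorphic to an open interval and $M = \bigcup_n W_n$ is an ascending union of open intervals along interval-preserving embeddings, which is itself homeomorphic to an open interval of $\mathbb R$, hence to $\mathbb R$. The hardest step is the gluing lemma, specifically the Hausdorff-based argument ruling out intersections with three or more components; an alternative approach via exhaustion by compact embedded arcs avoids that combinatorial case analysis but repackages the same Hausdorff input.
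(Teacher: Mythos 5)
The paper does not actually prove \cref{1manclass}: it is quoted as a standard fact, with the proof deferred to the cited reference \cite{gale1987classification}, whose ``take-home exam'' is precisely the chart-gluing classification of connected $1$-manifolds that you outline. So your route coincides with the intended source rather than with anything in the paper itself, and you supply strictly more detail than the paper does. Your opening caveat is also correct and worth making: under the paper's definition of manifold (no connectedness assumed), the statement as written fails for a disjoint union of lines, but it is only ever applied in \cref{mainManifold} to manifolds with exactly one connected component, so the connected case is what is needed.

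Within your sketch, the one step that is genuinely misstated is the Hausdorff argument in the gluing lemma. A boundary point of a component of the open set $\phi_V(U\cap V)\subseteq I_V$ can never lie \emph{inside} $\phi_V(U\cap V)$, so the contradiction you describe (``to a point of $I_V$ inside $\phi_V(U\cap V)$'') can never be triggered as written. The correct dichotomy is: if a component $C$ of $\phi_U(U\cap V)$ has a boundary point $c$ interior to $I_U$, then the limit of the transition map along $C$ at $c$ is either an end of $I_V$ or an interior point $d$ of $I_V$ necessarily \emph{outside} $\phi_V(U\cap V)$; in the latter case $\phi_U^{-1}(c)$ and $\phi_V^{-1}(d)$ are distinct points that cannot be separated, contradicting Hausdorffness. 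You must also dispose of the other horn: if the limits at both interior endpoints of $C$ are ends of $I_V$, then $\tau(C)=I_V$, so $V\subseteq U$ and the overlap is connected, which is incompatible with three components. Only after both horns is ``each component abuts an end of $I_U$, hence at most two components'' available. Finally, the last step (an ascending union of open sets each homeomorphic to an interval is an interval) is true but not automatic: arbitrary homeomorphisms between nested subintervals need not extend, so one should inductively build compatible homeomorphisms onto \emph{bounded} intervals $J_0\subseteq J_1\subseteq\cdots$ and take the union. With these repairs your argument is the standard and complete proof of the connected case.
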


\section{The Cantor Space}

In this section we prove the main results about the homeomorphism group of the Cantor space and Thompson's group $V$.
We start with a technical lemma giving a more restricted description of a basis for the Zariski topology on an arbitrary group.

\begin{lemma}\label{lem:techZariski}
    Suppose that \(G\) is a group.
    Let \(\mathcal{B}_G\) denote the family of sets of the form
    \begin{equation}\label{bigeq}
       \makeset{x\in G}{\(1_G\neq x^{k_{0,0}}a_{0,1}\ldots a_{0,l_0-1}x^{k_{0,l_0-1}}a_{0,l_0}\)\\
   \(1_G\neq x^{k_{1,0}}a_{1,1}\ldots a_{1,l_1-1}x^{k_{1,l_1-1}}a_{1,l_1}\)\\
   \quad\quad\vdots\\
   \(1_G\neq x^{k_{m-1,0}}a_{m-1,1}\ldots a_{m-1,l_{m-1}-1}x^{k_{m-1,l_{m-1}-1}}a_{m-1,l_{m-1}}\)}
    \end{equation}
    where  
    \begin{itemize}
    \item \(m\in \N\), \(l_0,l_1,\ldots, l_{m-1}\in \N\backslash \{0\}\),
        \item  \(a_{i,j}\in G\) for \(i<m\) and \(0<j\leq l_i\),
        \item  \(a_{i,j}\neq 1_G\) for \(i<m\) and \(0<j<l_i\),
        \item  \(k_{i,j}\neq 0\) for \(i<m\) and \(0\leq j<l_i\).
    \end{itemize}
    The set \(\mathcal{B}_G\) is a basis for the Zariski topology on \(G\).
    Moreover, if \(\varnothing\notin \mathcal{B}_G\), then the Zariski topology is irreducible.
\end{lemma}
\begin{proof}
  The Zariski topology \(\mathfrak{Z}_G\), by definition, has a subbase consisting of the sets
    \[\makeset{x\in G}{\(1_G\neq a_{0}x^{k_0}a_1\ldots a_{l-1}x^{k_{l-1}}a_l\)}\]
    where  \(a_0,a_1\ldots a_{l}\in G\), \(k_{0},k_{1},\ldots, k_{l-1}\in \Z \). 
    By applying conjugation, grouping terms and omitting trivial cases (the empty and universal sets), we may assume without loss of generality that the subbasic sets are defined such that:
    \begin{enumerate}
        \item  \(a_0=1_G\),
        \item  \(l>0\),
        \item \(k_i\neq 0\) for all \(i<l\),
        \item \(a_i\neq 1_G\) for \(0<i< l\).
    \end{enumerate}
    The family of finite intersections of the remaining sets gives us precisely the basis \(\mathcal{B}_G\).
    
    Now suppose that \(\varnothing\notin \mathcal{B}_G\). We show that the Zariski topology is irreducible. Let \(U\) and \(W\) be two non-empty Zariski open subsets of \(G\). We must show that \(U\cap W\) is non-empty.
    As \(\mathcal{B}_G\) is a basis, there exist \(B_U, B_W\in \mathcal{B}_G\) with \(B_U\subseteq U\) and \(B_W\subseteq W\).
    Note also that \(B_U\cap B_W\in \mathcal{B}_G\). Thus \(B_U\cap B_W\neq \varnothing\).
    It follows that
    \[U\cap W \supseteq B_U\cap B_W \neq\varnothing\]
    as required.
    
\end{proof}

The following lemma will be our main tool for showing that Zariski topologies are irreducible.
It closely resembles Theorem~1.12 of \cite{bardyla2025note}, which shows that the semigroup Zariski topology on a group is irreducible under weaker hypotheses. 
These weaker hypotheses are not sufficient for the group Zariski topology as can be seen by considering the group \(\operatorname{Sym}(\N)\) and Theorem~2.1 of \cite{banakh2012algebraically} or indeed Thompson's group \(T\) and \cref{mainT}. 
The stronger assumptions however are still satisfied for a large family of examples.

\begin{lemma}\label{main}
    Suppose that \(X\) is an infinite set and \(G\) is a highly transitive group of permutations of \(X\) such that every non-identity element has infinite support.
    In this case the Zariski topology on \(G\) is irreducible.
\end{lemma}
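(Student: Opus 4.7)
The plan is to prove the stronger statement that any finite intersection of non-empty subbasic open sets of the Zariski topology is itself non-empty; this implies irreducibility since any two non-empty basic open sets thereby intersect. Unwinding the definition, I need to show: given finitely many words $w_1(x), \ldots, w_n(x)$ of the form $g_0 x^{k_0} g_1 \cdots g_{l-1} x^{k_{l-1}} g_l$ with each $U_{w_i} := \{x \in G : w_i(x) \neq 1_G\}$ non-empty, there is a single $g \in G$ lying in every $U_{w_i}$.

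First, for each $i$ I would pick $h_i \in G$ witnessing $w_i(h_i) \neq 1_G$. By hypothesis $w_i(h_i)$ has infinite support, so there are infinitely many $x \in X$ with $(x) w_i(h_i) \neq x$. The strategy is to construct $g$ so that, at a carefully chosen point $x_i$, the step-by-step trajectory of $x_i$ through $w_i(g)$ duplicates the trajectory through $w_i(h_i)$, giving $(x_i) w_i(g) = (x_i) w_i(h_i) \neq x_i$. A short induction shows that this duplication holds as long as $g$ agrees with $h_i$ on a specific finite set $P_i \subseteq X$, namely the iterated $h_i$-images of the intermediate points at which the powers $x^{k_{i,j}}$ act during the evaluation of $(x_i) w_i(h_i)$.

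The main obstacle is making these constraints mutually consistent across different $i$. It suffices to arrange that the sets $P_1, \ldots, P_n$ are pairwise disjoint and that the images $h_1(P_1), \ldots, h_n(P_n)$ are also pairwise disjoint, for then the rule $p \mapsto h_i(p)$ on $p \in P_i$ defines a finite partial bijection of $X$. This is where the infinite-support hypothesis is essential: each point in $P_i \cup h_i(P_i)$ is obtained from $x_i$ by applying a fixed bijection of $X$ (a composition of the constants $g_{i,j}$ and powers of $h_i$), so any finite forbidden subset of $X$ excludes only finitely many candidate $x_i$, while the support of $w_i(h_i)$ is infinite. I would therefore choose $x_1, \ldots, x_n$ inductively, at stage $i$ selecting $x_i$ from the support of $w_i(h_i)$ so that $P_i$ and $h_i(P_i)$ avoid the previously accumulated finite set.

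Finally, high transitivity of $G$ supplies some $g \in G$ extending this finite partial bijection. By construction $g$ and $h_i$ agree on $P_i$, so the trajectory of $x_i$ under $w_i(g)$ coincides with that under $w_i(h_i)$, forcing $w_i(g) \neq 1_G$ for every $i$. Thus $g \in \bigcap_i U_{w_i}$ and the Zariski topology is irreducible.
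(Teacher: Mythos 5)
Your argument is correct, but it takes a genuinely different route from the paper. The paper first normalizes the words (conjugating so the leading constant is trivial, absorbing zero exponents and trivial interior constants) and then constructs a solution of the whole finite system from scratch: it runs an induction that extends a finite partial permutation step by step, choosing fresh points for each partial orbit segment, and the infinite-support hypothesis is applied to the \emph{interior constants} \(a_{i,j}\) of the normalized words to guarantee that each newly reached point is moved off the accumulated finite set. You instead assume each subbasic set is non-empty (which is all irreducibility needs, since a non-empty basic set has non-empty subbasic constituents), pick a witness \(h_i\) for each word, and apply the infinite-support hypothesis to the \emph{evaluated elements} \(w_i(h_i)\): the finitely many trajectory points are images of \(x_i\) under fixed bijections, so a suitable base point \(x_i\) in the infinite support can be chosen to dodge the finite set accumulated so far, and the finite partial maps \(p\mapsto (p)h_i\) on the sets \(P_i\) glue into one finite partial bijection that high transitivity extends. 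Your route buys a shorter argument with no normalization and much lighter bookkeeping, at the cost of proving only the conditional statement (solvability of systems whose individual inequations are solvable) rather than the paper's outright non-emptiness of every normalized basic set; both suffice for irreducibility. One detail to spell out when writing it up: for negative exponents \(k_{i,j}\) the duplication of the trajectory requires agreement of \(g^{-1}\) with \(h_i^{-1}\), which you get by putting the \(h_i\)-preimages of the visited points into \(P_i\); these are still images of \(x_i\) under fixed bijections, so your avoidance count is unaffected.
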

\begin{proof}
   Recall the basis \(\mathcal{B}_G\) from \cref{lem:techZariski}.
Let \(m\), \(l_i\), \(a_{i,j}\) and \(k_{i,j}\) and \(S\in \mathcal{B}_G\) be such that \(S\) is the set described in  \cref{bigeq}.
    By \cref{lem:techZariski}, we need only show that \(S\neq \varnothing\).

    We will use many partial permutations of \(X\) in building up to an element of \(S\), that is to say bijections between subsets of \(X\).
     We compose and invert these partial permutations as binary relations and denote the composition by concatenation.
     Let \(p_0,p_1,\ldots, p_{m-1}\in X\) be fixed and distinct.
    For a partial permutation \(\sigma\) of \(X\), we define \(d_\sigma:\{0,1,\ldots,m-1\}\to \N\) by
     defining \((i)d_{\sigma}\) to be the largest value of \(j\leq l_i\) such that \(p_i\in \operatorname{dom}(\sigma^{k_{i,0}}a_{i,1}\ldots \sigma^{k_{i,j-1}}a_{i, j})\) (when \(j=0\), we interpret \(\sigma^{k_{i,0}}a_{i,1}\ldots \sigma^{k_{i,j-1}}a_{i, j}\) as the identity with domain \(X\)). 
     For each partial permutation \(\sigma\) and \(i<m\), let 
     \[f_{i,\sigma}:=\sigma^{k_{i,0}}a_{i,1}\ldots \sigma^{k_{i,(i)d_\sigma-1}}a_{i, (i)d_\sigma}.\]
     In particular when \((i)d_{\sigma}=0\), the map \(f_{i,\sigma}\) is the identity function on \(X\).
     We also define
     \[B_\sigma :=\dom(\sigma)\cup \im(\sigma)\cup \{p_0,p_1,\ldots,p_{m-1}\}\cup \{(p_0)f_{0,\sigma},(p_1)f_{1,\sigma},\ldots, (p_{m-1})f_{m-1,\sigma}\}.\]
     Note that \(B_\sigma\) is always finite when \(\sigma\) is finite.
     We inductively build a finite partial permutation \(\sigma\) of \(X\) such that at each stage of the induction, the following conditions hold:
     \begin{enumerate}
      \item[($\sigma$1)] if \(i\neq i'<m\), then \((p_i)f_{i,\sigma}\neq (p_{i'})f_{i',\sigma}\),
         \item[($\sigma$2)] if \(i<m\) and \((i)d_\sigma \neq l_i\), then  \((p_i)f_{i,\sigma}\notin \dom(\sigma)\cup \im(\sigma)\),
         \item[($\sigma$3)] if \(i<m\) and \((i)d_\sigma\neq 0\), then  \((p_i)f_{i,\sigma}\neq p_i\).
     \end{enumerate}
     Moreover, at each stage of the induction we will increase one of the image points of \(d_\sigma\) (which are bounded by the \(l_i\)) so that the induction must eventually terminate. 
     At the end of the induction, we will have \((i)d_\sigma=l_i\) for all \(i<m\). 
     Once we have done this, we can extend \(\sigma\) to an element of \(x\in G\) due to \(G\) being highly transitive. Moreover, from the last condition of the induction, we will have \(p_i\neq (p_i)f_{i,\sigma}=(p_i)f_{i,x}\) for all \(i<m\). In particular we will have \(f_{i,x}\neq 1_G\) for all \(i<m\), so \(x\in S\) as required.

     It remains only to inductively build \(\sigma\).
     Initialize \(\sigma\) as the empty function. In particular \(d_\sigma\) is the constant zero function, each \(f_{i,\sigma}\) is the identity function and \(B_{\sigma}=\{p_0,p_1,\ldots, p_{m-1}\}\).
     
     For the inductive step, choose \(i<m\) such that \((i)d_{\sigma}<l_i\). We keep this \(i\) fixed for the remainder of the proof. The next claim helps us extend \(\sigma\).
     \begin{claim}
         There is a finite set \(C_\sigma\subseteq X\) and a finite partial bijection \(\gamma\) of \(X\) satisfying the following conditions:
         \begin{itemize}
             \item \(B_\sigma\subseteq C_\sigma\), 
             \item \((\dom(\gamma)\cup \im(\gamma))\cap B_{\sigma}=\{(p_i)f_{i,\sigma}\}\),
             \item \((p_i)f_{i,\sigma}\in \dom(\gamma^{k_{i,(i)d_{\sigma}}})\), 
             \item \((\dom(\gamma)\cup \im(\gamma))\backslash C_\sigma=\{(p_i)f_{i,\sigma}\gamma^{k_{i,(i)d_{\sigma}}}\}\), 
             \item \(\dom(\gamma)\cap \dom(\sigma)=\varnothing\),
             \item  \(\im(\gamma)\cap \im(\sigma)=\varnothing\),
             \item \((p_i)f_{i,\sigma}\gamma^{k_{i,(i)d_{\sigma}}}a_{i, (i)d_{\sigma}+1}\notin C_\sigma\)
             \item if \((i)d_{\sigma}+1\neq l_i\), then \((p_i)f_{i,\sigma}\gamma^{k_{i,(i)d_{\sigma}}}a_{i, (i)d_{\sigma}+1}\neq  (p_i)f_{i,\sigma}\gamma^{k_{i,(i)d_{\sigma}}}\).
         \end{itemize} 
     \end{claim}
     \begin{proof}[Proof of Claim]
     We define \(C_\sigma\) to be the set \(\{c_0,c_1,\ldots, c_{|k_{i,(i)d_{\sigma}}|-1}\}\cup B_{\sigma}\) where \(c_0=(p_i)f_{i,\sigma}\) and the points \(c_1,c_2,\ldots,c_{|k_{i,(i)d_{\sigma}}|-1}\in X\backslash B_\sigma\) are arbitrary but distinct.
     We also define another point \(c_{|k_{i,(i)d_{\sigma}}|}\) depending on the following case analysis:
     \begin{enumerate}
         \item[(i)] If \((i)d_\sigma\neq l_i-1\): 
         Choose \(c_{|k_{i,(i)d_{\sigma}}|}\in X\backslash C_\sigma\) such that \((c_{|k_{i,(i)d_{\sigma}}|})a_{i,(i)d_{\sigma}+1}\in X\backslash C_\sigma\) and  \((c_{|k_{i,(i)d_{\sigma}}|})a_{i,(i)d_{\sigma}+1}\neq c_{|k_{i,(i)d_{\sigma}}|}\). This is possible as \(a_{i,(i)d_{\sigma}+1}\) has infinite support and \(C_\sigma\) is finite.
       
       \item[(ii)] If \((i)d_\sigma= l_i-1\): 
         Choose \(c_{|k_{i,(i)d_{\sigma}}|}\in X\backslash C_\sigma\) such that \((c_{|k_{i,(i)d_{\sigma}}|})a_{i,l_i}\in X\backslash C_\sigma\). In this case it is possible that \(a_{i,(i)d_{\sigma}+1}\) is the identity function so we cannot make the same assumption as in case (i).
     \end{enumerate}
     If \(k_{i,(i)d_{\sigma}}>0\), we define \(\gamma: \{c_0,\ldots, c_{|k_{i,(i)d_{\sigma}}|-1}\}\to X\) by \((c_j)\gamma=c_{j+1}\). If \(k_{i,(i)d_{\sigma}}<0\), we define  \(\gamma: \{c_1,\ldots, c_{|k_{i,(i)d_{\sigma}}|}\}\to X\) by \((c_j)\gamma=c_{j-1}\).   

     We need to check that the choice of \(C_\sigma\) and \(\gamma\) satisfy the required conditions. \(B_\sigma \subseteq C_\sigma\) by definition. Note that \(\dom(\gamma)\cup \im(\gamma)=\{c_0, c_1,\ldots, c_{|k_{i,(i)d_{\sigma}}|}\}\) in both cases for \(\gamma\). The only element of this set not explicitly chosen to not belong to \(B_\sigma\) is \(c_0\). We also have \(c_0=(p_i)f_{i,\sigma}\in B_\sigma\). Thus \((\dom(\gamma)\cup \im(\gamma))\cap B_{\sigma}=\{(p_i)f_{i,\sigma}\}\). Also, by ($\sigma$2), it follows that \(\dom(\gamma)\cap \dom(\sigma)=\varnothing\) and \(\im(\gamma)\cap \im(\sigma)=\varnothing\).
     By the choice of \(\gamma\), \(\gamma^{k_{i,(i)d_{\sigma}}}\) has domain \(\{c_0\}=\{(p_i)f_{i,\sigma}\}\) and \((c_0)\gamma^{k_{i,(i)d_{\sigma}}}=c_{|k_{i,(i)d_{\sigma}}|}\).
     The only element of \(\dom(\gamma)\cup \im(\gamma)=\{c_0, c_1,\ldots, c_{|k_{i,(i)d_{\sigma}}|}\}\) which does not belong to \(C_\sigma\) is \(c_{|k_{i,(i)d_{\sigma}}|}=(c_0)\gamma^{k_{i,(i)d_{\sigma}}}=(p_i)f_{i,\sigma}\gamma^{k_{i,(i)d_{\sigma}}}\).
     In both case (i) and case (ii), the element \((c_{|k_{i,(i)d_{\sigma}}|})a_{i, (i)d_{\sigma}+1}=(p_i)f_{i,\sigma}\gamma^{k_{i,(i)d_{\sigma}}}a_{i, (i)d_{\sigma}+1}\) was chosen not to belong to \(C_\sigma\). Finally, if \((i)d_{\sigma}+1\neq l_i\), then from case (i) we also have \((p_i)f_{i,\sigma}\gamma^{k_{i,(i)d_{\sigma}}}=c_{|k_{i,(i)d_{\sigma}}|}\neq (c_{|k_{i,(i)d_{\sigma}}|})a_{i, (i)d_{\sigma}+1}=(p_i)f_{i,\sigma}\gamma^{k_{i,(i)d_{\sigma}}}a_{i, (i)d_{\sigma}+1}\) as required.
     \end{proof}
     
     Let \(C_\sigma\) and \(\gamma\) be as in the claim.
     By the claim, the domains and images of \(\sigma\) and \(\gamma\) are disjoint. Thus we can define a new finite partial permutation \(\beta\) of \(X\) with domain \(\dom(\sigma)\cup \dom(\gamma)\) by
     \[(p)\beta = \begin{cases}
         (p)\sigma & \text{if } p\in\dom(\sigma)\\
         (p)\gamma & \text{if } p\in\dom(\gamma).
     \end{cases}\]
     We need only show that \(\beta\) can serve as \(\sigma\) for the next step of the induction. That is to say, we need only check that the following conditions hold:
\begin{enumerate}
\item[($\beta$0)]  \((i)d_{\beta}>(i)d_{\sigma}\),
      \item[($\beta$1)] if \(j\neq j'<m\), then \((p_j)f_{j,\beta}\neq (p_{j'})f_{j',\beta}\),
         \item[($\beta$2)] if \(j<m\) and \((j)d_\beta \neq l_j\), then  \((p_j)f_{j,\beta}\notin \dom(\beta)\cup \im(\beta)\),
         \item[($\beta$3)] if \(j<m\) and \((j)d_\beta\neq 0\), then  \((p_j)f_{j,\beta}\neq p_j\).
     \end{enumerate}
     
     By construction, the point
     \((p_i)f_{i,\sigma}\beta^{k_{i,(i)d_{\sigma}}}=(p_i)f_{i,\sigma}\gamma^{k_{i,(i)d_{\sigma}}}\) is the unique element of the set \((\dom(\beta)\cup \im(\beta)) \backslash C_\sigma\). In particular \((i)d_\beta\geq (i)d_\sigma+1\). If \((i)d_\sigma=l_i-1\), then we must have \((i)d_\beta=(i)d_\sigma+1\). On the other hand, if \((i)d_\sigma\neq l_i-1\), then the claim shows that \[(p_i)f_{i,\sigma}\beta^{k_{i,(i)d_{\sigma}}}a_{i,(i)d_\sigma+1}=(p_i)f_{i,\sigma}\gamma^{k_{i,(i)d_{\sigma}}}a_{i,(i)d_\sigma+1}\] does not belong to \(C_\sigma\) and is not equal to \((p_i)f_{i,\sigma}\beta^{k_{i,(i)d_{\sigma}}}\). As \((p_i)f_{i,\sigma}\beta^{k_{i,(i)d_{\sigma}}}\) is the only element of \((\dom(\beta)\cup \im(\beta)) \backslash C_\sigma\), the point \((p_i)f_{i,\sigma}\beta^{k_{i,(i)d_{\sigma}}}a_{i,(i)d_\sigma+1}\) belongs to neither the domain nor the image of \(\beta\). 
     So 
     \begin{equation}\tag{$*$}\label{eq:d+1}
         (i)d_\beta=(i)d_\sigma +1
        \end{equation}in all cases. We have now established ($\beta$0). It remains only to check conditions ($\beta$1), ($\beta$2) and ($\beta$3).
     We have also just established that if  \((i)d_\beta \neq l_i\), then 
     \begin{equation}\tag{$**$}\label{eq:newendnowhere}
         (p_i)f_{i,\beta}=(p_i)f_{i,\sigma}\beta^{k_{i,(i)d_{\sigma}}}a_{i,(i)d_\sigma+1}\notin \dom(\beta)\cup \im(\beta)\cup C_{\sigma}.
     \end{equation}

     By the claim, the only element of \((\dom(\gamma)\cup \im(\gamma))\cap B_\sigma \) is the point \((p_i)f_{i,\sigma}\).
     By ($\sigma$1) and ($\sigma$2), we have \((p_{i'})f_{i',\sigma}\in B_\sigma\backslash (\dom(\sigma)\cup \im(\sigma))\) and \((p_i)f_{i,\sigma}\neq (p_{i'})f_{i',\sigma}\) for \(i'\neq i\).
     It follows that \((p_{i'})f_{i',\sigma}\not\in \dom(\beta)\cup \im(\beta)\) when \(i'\neq i\). Thus when \(i'\neq i\), we have 
     \begin{equation}\tag{$***$}\label{eq:otheris}
     (i')d_\sigma =(i')d_\beta\text{ and }(p_{i'})f_{i',\beta}=(p_{i'})f_{i',\sigma}\in B_{\sigma}.
     \end{equation}
     From \cref{eq:d+1} and \cref{eq:otheris}, we have now entirely determined the map \(d_{\beta}\) and the partial map \(f_{i,\beta}\).
     From the claim we have \((\dom(\gamma)\cup \im(\gamma))\cap B_{\sigma}=\{(p_i)f_{i,\sigma}\}\). This together with \cref{eq:otheris} and ($\sigma$2) show that \((p_{i'})f_{i',\beta}=(p_{i'})f_{i',\sigma}\notin \dom(\beta)\cup \im(\beta)\) for \(i\neq i'\) with \((i')d_\sigma\neq l_{i'}\).
     By \cref{eq:newendnowhere}, we also have (\(p_{i})f_{i,\beta}\notin \dom(\beta)\cup \im(\beta)\) when \((i')d_\sigma\neq l_{i'}\). We have now shown that condition ($\beta$2) holds. 
     Only conditions ($\beta$1) and ($\beta$3) remain.
     
      By \cref{eq:d+1} and the claim, we have \((p_i)f_{i,\beta}\notin C_\sigma\supseteq B_\sigma\). Thus, by \cref{eq:otheris}, \((p_i)f_{i,\beta}\) is distinct from each of \((p_{i'})f_{i',\beta}=(p_{i'})f_{i',\sigma}\) with \(i'\neq i\). The points \((p_{i'})f_{i',\beta}=(p_{i'})f_{i',\sigma}\) for various \(i'\neq i\) are also distinct by \cref{eq:otheris} and ($\sigma$1). Together these establish that condition ($\beta$1) holds. Only condition ($\beta$3) remains.

     Suppose now that \(i'<m\) and \((i')d_\beta \neq 0\).
     There are two cases. If \(i'\neq i\), then by \cref{eq:otheris} we have \((i')d_\sigma=(i')d_\beta \neq 0\) and \((p_{i'})f_{i',\beta}=(p_{i'})f_{i',\sigma}\).
     By ($\sigma$3), it follows that  \((p_{i'})f_{i',\beta}\neq p_{i'}\) as required.
     The remaining case is when \(i'=i\). By \cref{eq:d+1} and the claim,
     the point \((p_i)f_{i,\beta}\) does not belong to \(C_\sigma\).
     In particular, \((p_i)f_{i,\beta}\neq p_i\in B_\sigma\subseteq C_\sigma\).
     
\end{proof}

We can now finish the proof of \cref{maingeneral}. 
Moreover \cref{mainV} is shown as a special case.

\begin{theorem}\label{topcor}
    Suppose that \(X\) is a Hausdorff topological space with no isolated points and \(G\) acts highly transitively on \(X\) by homeomorphisms.
    In this case the group Zariski topology on \(G\) is irreducible.
\end{theorem}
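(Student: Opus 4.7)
The plan is to reduce directly to Lemma \ref{main}: given its conclusion, all that is required is to verify its three hypotheses under the topological assumptions here, namely that \(X\) is infinite, that \(G\) is highly transitive on \(X\), and that every non-identity element of \(G\) has infinite support. The second is assumed outright. The first is a short observation: a Hausdorff space in which every point is non-isolated cannot be finite, since a finite Hausdorff space is discrete.

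The substantive point is the infinite-support condition, and this is where the topological hypotheses on \(X\) genuinely intervene. I would argue as follows. Suppose \(g\in G\) has finite support, and let \(F=\set{x\in X}{(x)g=x}\) denote its fixed-point set. Since \(g\) is a homeomorphism (hence in particular continuous) and \(X\) is Hausdorff, \(F\) is the preimage of the diagonal of \(X\times X\) under the continuous map \(x\mapsto (x,(x)g)\), hence \(F\) is closed in \(X\). On the other hand, \(X\setminus F\) is the support of \(g\), which by assumption is finite. Because \(X\) is Hausdorff with no isolated points, no finite set has non-empty interior, so \(X\setminus F\) has empty interior, i.e.\ \(F\) is dense. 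A subset that is simultaneously closed and dense must be all of \(X\), so \(g=1_G\).

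Having verified the three hypotheses, Lemma \ref{main} applies and yields that the group Zariski topology on \(G\) is irreducible, completing the proof of \cref{topcor}. I do not anticipate any serious obstacle: the only non-trivial input is the closed-plus-dense argument above, and it is a standard consequence of Hausdorffness together with the absence of isolated points. For \cref{mainV} one then only needs to note that \(V_n\) (and more generally \(\operatorname{Homeo}(2^\omega)\)) acts by homeomorphisms on the Hausdorff perfect space \(2^\omega\), and acts highly transitively on it by the standard prefix-replacement argument, so \cref{topcor} applies.
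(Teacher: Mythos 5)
Your proof is correct and takes essentially the same route as the paper: both reduce to \cref{main} by observing that the fixed-point set of a homeomorphism of a Hausdorff space is closed (equivalently, the support is open) and that a Hausdorff space with no isolated points has no non-empty finite open subsets, so every non-identity element has infinite support. Your explicit check that \(X\) is infinite is a minor addition that the paper leaves implicit.
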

\begin{proof}
    To apply \cref{main}, we need only show that every non-trivial element has infinite support. 
    This follows as the support of any homeomorphism of a Hausdorff space is open, and a Hausdorff space with no isolated points has no non-empty finite open subsets.
\end{proof}

\begin{theorem}
    The group Zariski topologies on Thompson's group  \(V\) (or any of the Thompson groups \(V_n\) with \(n\geq 2\)) and the group \(\operatorname{Homeo}(2^\omega)\) are irreducible.
\end{theorem}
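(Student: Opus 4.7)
My plan is to deduce both cases as instances of \cref{topcor} (equivalently \cref{maingeneral}). In each case it suffices to exhibit a Hausdorff space \(X\) with no isolated points on which the given group acts faithfully, highly transitively, and by homeomorphisms.

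For \(\operatorname{Homeo}(2^\omega)\) I would take \(X = 2^\omega\). This space is compact, metrizable and perfect, and the action is tautologically faithful. To verify high transitivity, given distinct \(x_1,\dots,x_r\) and distinct targets \(y_1,\dots,y_r\), I would pick clopen cylinders \([u_i]\ni x_i\) and \([v_i]\ni y_i\) of a common depth \(k\) large enough that both the \([u_i]\) and the \([v_i]\) are pairwise disjoint. Since each cylinder is itself homeomorphic to \(2^\omega\) and \(2^\omega\) is homogeneous, I can choose homeomorphisms \([u_i]\to[v_i]\) sending \(x_i\) to \(y_i\). The complements \(2^\omega\setminus\bigsqcup_i[u_i]\) and \(2^\omega\setminus\bigsqcup_i[v_i]\) are each a disjoint union of \(2^k - r\) depth-\(k\) cylinders, and any matching bijection between them, combined with a homeomorphism of each pair of matched cylinders, gives the desired global homeomorphism.

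For \(V_n\) the situation is more delicate because \(V_n\) is not transitive on \(\mathfrak{C}_n\): two sequences lie in a common \(V_n\)-orbit exactly when they have the same tail. I would therefore take \(X\) to be a single \(V_n\)-orbit, concretely the orbit of \(0^\omega\), consisting of eventually-zero sequences. With the subspace topology from \(\mathfrak{C}_n\), \(X\) is Hausdorff and has no isolated points, since every basic clopen neighbourhood in \(\mathfrak{C}_n\) of a point of \(X\) meets \(X\) in infinitely many further points (replace a finite portion of the tail by any other eventually-zero block). The action of \(V_n\) on \(X\) is by homeomorphisms, and is faithful because \(X\) is dense in \(\mathfrak{C}_n\) and \(V_n\) acts continuously there, so a non-identity element that fixed \(X\) pointwise would extend by continuity to the identity on \(\mathfrak{C}_n\). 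High transitivity on \(X\) is the standard prefix-replacement argument: given a bijection \(\{x_1,\dots,x_r\}\to\{y_1,\dots,y_r\}\) inside \(X\), I pick prefixes \(u_i\) of \(x_i\) long enough to contain every non-zero digit of \(x_i\) and to make the \([u_i]\) pairwise disjoint, and similarly prefixes \(v_i\) of \(y_i\). Since the residual tails are then all \(0^\omega\), the prefix replacement \(u_i\mapsto v_i\) sends \(x_i\) to \(y_i\); after refining the two partitions so that the complementary cylinders can be counted equally, any matching of these cylinders produces an element of \(V_n\) extending the given bijection.

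Once these verifications are in place, \cref{topcor} immediately yields irreducibility. The step I expect to require the most care is the verification of high transitivity for \(V_n\) on a single orbit, in particular the bookkeeping needed to guarantee that, after possibly refining the chosen prefixes, the complementary cylinders on the two sides can be matched up to produce a genuine element of the Thompson group.
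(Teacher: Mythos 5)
Your proposal is correct and takes essentially the same approach as the paper: both cases are deduced from \cref{topcor}, applied to the Cantor space itself for \(\operatorname{Homeo}(2^\omega)\) and to a single orbit with the subspace topology for \(V_n\). The paper merely asserts the faithfulness, high transitivity, and no-isolated-points facts that you verify in detail, so your write-up just supplies standard verifications the paper leaves implicit.
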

\begin{proof}
As the group \(\operatorname{Homeo}(2^\omega)\) acts highly transitively on the Cantor space, we can directly apply \cref{topcor} to show that the group Zariski topology on \(\operatorname{Homeo}(2^\omega)\) is irreducible. 

The group \(V_n\) does not act transitively on the Cantor space \(\{0,1,\ldots, n-1\}^\omega\) (the group is countable) so we restrict our attention to a single orbit. 
It is well-known however (see for example \cite{hightrans}) that the action of \(V_n\) on any of its orbits is highly transitive. It thus suffices to show that any of its orbits has no isolated points as a subspace of \(\{0,1,\ldots, n-1\}^\omega\). 

Let \(x=x_0x_1x_2\ldots \in \{0,1,\ldots, n-1\}^\omega\) be arbitrary but fixed.  We must show that \(x\) is not isolated in the subspace \((x)V_n\) of \(\{0,1,\ldots, n-1\}^\omega\). For each \(m\in \N\) fix \(y_m\in \{0,1,\ldots,n-1\}\backslash \{x_m\}\). For each \(m\) we can define an element \(f_m\in V_n\) by swapping the prefixes \(x_0x_1\ldots x_{m-1}x_m\) and \(x_0x_1\ldots x_{m-1}y_m\) and fixing all other points. 
It follows that the elements \(x_0\ldots x_{m-1}y_mx_{m+1}\ldots\) belong to the orbit of \(x\) for all \(m\in \N\). 
By definition, these elements are all distinct from \(x\) and converge to \(x\). The result follows.
\end{proof}

\section{The Interval and the Circle}
In this section, the main tool is \cref{mainFTlemma} so we start by establishing the conventions needed there.
For convenience, we view the topological space \(S^1\) as the set \([0,1)\) with the topology induced by the metric 
\[d(x,y)=\operatorname{min}(|x-y|,1-|x-y|).\]

Under the compact-open topology, the topological group $\operatorname{Homeo}([0,1])$ is topologically isomorphic to the stabilizer of $0$ in the topological group $\operatorname{Homeo}(S^1)$.
As such, we will for simplicity work entirely in the group \(\operatorname{Homeo}(S^1)\).
Given \(x\neq y\in S^1=[0,1)\), we write \((x,y)\) for the \emph{open interval from \(x\) to \(y\)} using the cyclic order on \(S^1\). That is to say
\[(x,y):=\makeset{z\in [0,1)}{\(x<z<y\) or \(y<x<z\) or \(z<y<x\)}.\]
We refer to an arbitrary set of this form as simply an \emph{open interval}. Note that we consider neither the empty set nor the entire space to be an open interval.
As a subspace of \(S^1\), each open interval is open, connected and homeomorphic to \(\mathbb{R}\).
It is well-known that every homeomorphism of \(S^1\) either preserves or reverses the cyclic order. In particular the following proposition will be useful to us.
\begin{prop}\label{interval-preserve}
    If \(h\in \operatorname{Homeo}(S^1)\) and \(I\) is an open interval of \(S^1\), then \((I)h\) is also an open interval of \(S^1\).
\end{prop}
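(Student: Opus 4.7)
The plan is to exploit the cyclic-order dichotomy for homeomorphisms of $S^1$ that is recalled immediately before the statement. Writing $I = (x,y)$ for some $x \neq y \in [0,1)$, I would set $u = (x)h$ and $v = (y)h$, and first note that $u \neq v$ since $h$ is a bijection; thus $(u,v)$ is itself a well-defined open interval in the sense of this section (in particular neither empty nor all of $S^1$, since it avoids both endpoints $u$ and $v$).

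Next I would split into two cases according to whether $h$ preserves or reverses the cyclic order on $S^1$. Membership of a point $z \in S^1 \setminus \{x,y\}$ in $(x,y)$ is precisely the assertion that the triple $(x,z,y)$ is positively oriented in the cyclic order. If $h$ preserves the cyclic order, this translates to $((x)h,(z)h,(y)h) = (u,(z)h,v)$ being positively oriented, i.e., $(z)h \in (u,v)$; the reverse inclusion follows by applying the same observation to $h^{-1}$ at an arbitrary point of $(u,v)$. Hence $(I)h = (u,v)$. The cyclic-order-reversing case is identical with the roles of $u$ and $v$ exchanged, yielding $(I)h = (v,u)$. Either way $(I)h$ is an open interval.

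The only real obstacle is making the informal phrase \emph{preserves or reverses the cyclic order} precise enough to apply directly to triples, but this is essentially the definition of cyclic-order preservation, so nothing substantive needs to be argued here. As a sanity check, one could instead use a purely topological route: $(I)h$ is open, nonempty, connected, and a proper subset of $S^1$ (each property being preserved by the homeomorphism $h$), and any such subset of $S^1$ is an open interval, as one sees by puncturing $S^1$ at a point of the complement and identifying the result with $\mathbb{R}$. I would nevertheless prefer the cyclic-order argument, because it integrates directly with the remark made just before the proposition.
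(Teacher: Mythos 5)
Your cyclic-order argument is correct and is exactly the route the paper intends: the proposition is stated without proof as an immediate consequence of the remark that every homeomorphism of $S^1$ preserves or reverses the cyclic order, and your case split on triples $(x,z,y)$ together with the $h^{-1}$ step is just that argument written out. (Your alternative topological sanity check is also fine, with the tiny caveat that ``open, connected, nonempty, proper'' alone would allow $S^1$ minus a point, which is not an interval in the paper's convention; here that case is excluded because $(I)h$ omits the two distinct points $(x)h$ and $(y)h$.)
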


In our arguments we will need to keep track of the size of these intervals and their unions.
    We denote by \(\lambda\) the usual Lebesgue measure on \([0,1)=S^1\). For example, \(\lambda(\varnothing)=0\), \(\lambda(S^1)=1\),  \(\lambda(\frac{1}{5}, \frac{3}{5})=\frac{2}{5}\) and \(\lambda(\frac{3}{5}, \frac{1}{5})=\frac{3}{5}\). A key observation will be that if any interval \(I\) contains points of distance at least \(\varepsilon\), then the measure of the interval is at least \(\varepsilon\).

\begin{lemma}\label{mainFTlemma}
    Suppose that \(G\leq \Homeo(S^1)\) and there is a dense subset \(D\) of \([0, 1)\) such that for all \(p,q\in D\) with \(p<q\), there is \(t_{p,q}\in G\) with support \((p,q)\).
    In this case \(\mathfrak{Z}_G\) is the compact-open topology.
\end{lemma}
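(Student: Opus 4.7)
The plan is to establish both containments between \(\mathfrak{Z}_G\) and the compact-open topology. The containment \(\mathfrak{Z}_G \subseteq\) compact-open is easy: by \cref{compact-open-good}, the compact-open topology makes \(\operatorname{Homeo}(S^1)\) a Hausdorff topological group, and restricts to one on \(G\); since \(\mathfrak{Z}_G\) is coarser than every Hausdorff group topology, the inclusion follows.

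For the reverse containment, the sup-metric \(d_\infty\) from \cref{compact-open-good} and continuity of right-shifts in \(\mathfrak{Z}_G\) reduce the task to showing that for each \(\varepsilon > 0\), the open ball \(V_\varepsilon := \{f : d_\infty(f, 1_G) < \varepsilon\}\) is Zariski-open. I reduce further to proving the subgoal that, for each \(p \in D\) and open interval \((r, s)\) with \(r, s \in D\), the set \(\{f : (p)f \in (r, s)\}\) is Zariski-open. Given the subgoal, for each \(f_0 \in V_\varepsilon\) I choose a fine cyclic partition \(\{p_1, \ldots, p_n\} \subseteq D\) and target intervals \((r_i, s_i) \ni (p_i)f_0\) with \(r_i, s_i \in D\) and \((r_i, s_i) \subseteq B(p_i, \varepsilon)\), producing a basic Zariski-open neighborhood \(U := \bigcap_i \{f : (p_i)f \in (r_i, s_i)\} \ni f_0\). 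The cyclic monotonicity of orientation-preserving elements of \(\operatorname{Homeo}(S^1)\) combined with a Dini-style passage from dense pointwise control to uniform control then ensures \(U \subseteq V_\varepsilon\); orientation-reversing elements lie far from \(1_G\) in \(d_\infty\) and can be excluded by taking \(\varepsilon\) small or by an additional Zariski-closed condition.

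The main obstacle is the subgoal. I would write \(\{f : (p)f \in (r, s)\}\) as ``\(f^{-1} t_{r, s} f\) moves \(p\),'' so that it suffices to prove the point stabilizer \(\operatorname{Stab}(p) = \{f : (p)f = p\}\) is Zariski-closed (and then use Zariski-continuity of conjugation). The first step is that the rigid stabilizer \(R_{(u, v)} := \{g \in G : \operatorname{supp}(g) \subseteq (u, v)\}\) is Zariski-closed: it equals the intersection of the equations \(\{g : [g, t_{u', v'}] = 1_G\}\) over pairs with \([u', v'] \subset S^1 \setminus (u, v)\) and \(u', v' \in D\), exploiting that disjoint supports commute together with density of \(D\) and the open-support property of homeomorphisms of Hausdorff spaces. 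The hard step is then to extract \(\operatorname{Stab}(p)\) itself: while \(f \in \operatorname{Stab}(p)\) implies \(f R_{(u, v)} f^{-1} = R_{(u, v)f^{-1}}\) with \((u, v)f^{-1} \ni p\) for each \((u, v) \ni p\), encoding the existential ``is a rigid stabilizer of some interval containing \(p\)'' as a Zariski-closed condition requires delicate further argumentation exploiting the dynamical structure of the \(G\)-action on \(S^1\); this is the technical heart of the proof.
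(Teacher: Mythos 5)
Your first containment, the reduction to $\varepsilon$-balls around $1_G$ via continuity of shifts, and the observation that rigid stabilizers $R_{(u,v)}=\{g: \operatorname{supp}(g)\subseteq (u,v)\}$ are Zariski-closed (disjoint supports commute; conversely a moved point outside $(u,v)$ lies in an open set of moved points, so one can choose $t_{u',v'}$ with $(u',v')$ disjoint from its own image under $g$, forcing $[g,t_{u',v'}]\neq 1_G$) are all sound. But the proof has a genuine gap exactly where you say it does: the subgoal that $\{f: (p)f\in (r,s)\}$ is Zariski-open, which you correctly reduce (via Zariski-continuity of $f\mapsto f t_{r,s}f^{-1}$) to showing that the point stabilizer $\operatorname{Stab}(p)$ is Zariski-closed, is never established. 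This is not a routine omission: a priori the only Zariski-open sets you have are inequations $w(x)\neq 1_G$, and ``$(p)f$ lands in a prescribed interval'' is a two-sided pointwise constraint with no evident encoding of that form; the existential quantifier in ``$fR_{(u,v)}f^{-1}$ is the rigid stabilizer of \emph{some} small interval containing $p$'' is precisely the obstruction, and you acknowledge you do not know how to discharge it. Since the closedness of $\operatorname{Stab}(p)$ in $\mathfrak{Z}_G$ is (a posteriori) equivalent in difficulty to the lemma itself — it follows from the lemma because stabilizers are $d_\infty$-closed, but nothing short of the lemma seems to give it — the proposal as written does not constitute a proof.

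It is worth seeing how the paper's argument sidesteps this entirely. Rather than pinning $(p_i)g$ into a target interval, it uses only subbasic \emph{open} conditions of commutator type, $[t_{p_i,p_{i+1}}^{\,g},\,t_{p_{i,l},p_{i,r}}]\neq 1_G$, whose sole consequence is the weak, one-sided statement that the image interval $(p_i,p_{i+1})g$ \emph{meets} a tiny interval around $p_i$ (and likewise around $p_{i+1}$). This alone does not control where $(p_i)g$ goes; the control is recovered globally by a Lebesgue-measure count: by \cref{interval-preserve} the images $(p_j,p_{j+1})g$ are disjoint intervals of total measure $1$, each forced to have length at least about $p_{j+1}-p_j$, so no single one can be long, and hence every point moves by less than $\varepsilon$. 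In other words, the paper converts dense pointwise-style information into uniform information not by a Dini/monotonicity argument at marked points (which needs the unproven stabilizer claim) but by a pigeonhole on measure applied to conditions that are manifestly Zariski-open. If you want to salvage your route, you should either prove $\operatorname{Stab}(p)$ is Zariski-closed directly (I see no easy way) or replace your subgoal with interval-meeting conditions of the paper's kind, at which point you are essentially reproducing the paper's proof.
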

\begin{proof}
Let \(\Tau_{co}\) be the compact-open topology on \(G\).
By \cref{compact-open-good}, \(\Tau_{co}\) is a group topology induced by the uniform metric. In particular, as a Hausdorff group topology, \(\Tau_{co}\supseteq \mathfrak{Z}_G\). We need only show the reverse containment.
As shifting by elements of \(G\) is a homeomorphism with respect to \(\mathfrak{Z}_G\), we only need to show the each neighbourhood of \(1_G\) with respect to \(\Tau_{co}\) is a \(\mathfrak{Z}_G\)-neighbourhood of \(1_G\). 
The \(\Tau_{co}\)-neighbourhoods of \(1_G\) have a basis consisting of the sets
\[B_{d_\infty}(1_G,\varepsilon):= \makeset{f\in G}{for all \(x\in S^1\), \(d(x, (x)f)<\varepsilon\)}\]
where \(\varepsilon >0\).

Let \(\varepsilon >0\). We may assume without loss of generality that \(\varepsilon<1\).
As \(D\) is dense, we can find \(p_{1},\ldots, p_{k-1}\in D\backslash\{0\}\) such that for all \(i<k\), we have \(0<p_{i+1}-p_i<\frac{\varepsilon}{32}\), \(0< p_1<\frac{\varepsilon}{32}\) and \(1-\frac{\varepsilon}{32}<p_{k-1}<1\).
Define \(p_0=0\) and \(p_k=1\).
For each \(0<i<k\), also let \(p_{i,l},p_{i,r}\in D\) be such that for all \(0\leq j <k\) we have \(0<p_{i,r}-p_{i,l}<\frac{1}{2k}\min(p_{i+1}-p_i,p_{i}-p_{i-1})<\frac{\varepsilon}{64k}\) and \(p_{i,l}<p_i<p_{i,r}\). We can see the layout of the points \(p_i\), \(p_{i,l}\) and \(p_{i,r}\) in Figure~\ref{mainFTlemma}. 

\begin{figure}
\begin{center}
\vspace{-2mm}
\includegraphics[scale=0.3]{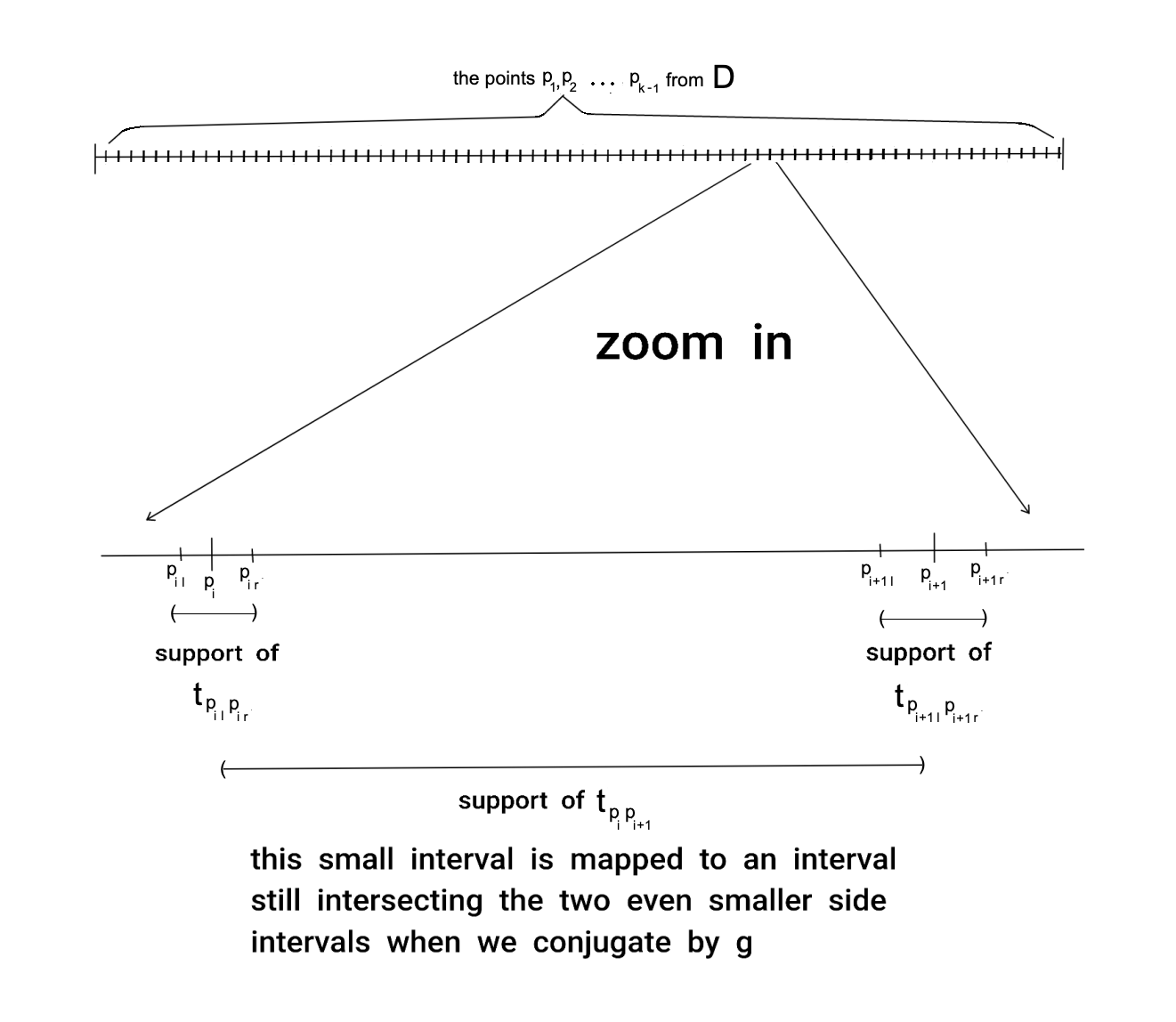}
\end{center}
 \caption{Figure of the distribution of the points $p_{i}, p_{i,l}, p_{i,r}$ in the set up of the proof of \cref{mainFTlemma}}
    \label{mainFTlemma}
\end{figure}
The idea is that we will model each point \(p_i\) with the element \(t_{p_{i,l},p_{p_{i,r}}}\) which is supported on a small neighbourhood of \(p_i\) and model each interval \((p_i, p_{i+1})\) for \(0<i<k-1\) by the element \(t_{p_i, p_{i+1}}\).
We then model moving the interval \((p_i, p_{i+1})\) ``not very much'' by conjugating \(t_{p_i, p_{i+1}}\) to an element whose support touches points near each of \(p_i\) and \(p_{i+1}\).

Accordingly, we define
    \[U_{i,\varepsilon}:=\makeset{g\in G}{\([t_{p_i,p_{i+1}}^{g},t_{p_{i,l},p_{i,r}}]\neq 1_G\) and \([t_{p_i,p_{i+1}}^{g},t_{p_{i+1,l},p_{i+1,r}}]\neq 1_G\)}\]
    for \(0<i<k-1\). The elements of \(U_{i, \varepsilon}\) are defined to act like the element \(g\) from Figure~\ref{mainFTlemma}.
    As the support of \(t_{p_{i,l}, p_{i,r}}\) is \((p_{i,l}, p_{i,r})\), the conjugate of \(t_{p_{i,l}, p_{i,r}}\) by \(t_{p_{i}, p_{i+1}}\) has support which is an interval between
\((p_{i,l})t_{p_{i}, p_{i+1}}\) and \((p_{i,r})t_{p_{i}, p_{i+1}}\). As \(t_{p_{i}, p_{i+1}}\) fixes \(p_{i,l}\) and does not fix \(p_{i,r}\), it follows that \(t_{p_{i}, p_{i+1}}\) and \(t_{p_{i,l}, p_{i,r}}\) do not commute.
Similarly, \(t_{p_{i}, p_{i+1}}\) and \(t_{p_{i+1,l}, p_{i+1,r}}\) do not commute. Thus \(1_G\in U_{i, \varepsilon}\) for all \(0<i<k-1\).
Let \(U_{\varepsilon}:=\cap_{0<i<k-1} U_{i, \varepsilon}\). We need only show that
\[1_G\in U_\varepsilon \subseteq B_{d_\infty}(1_G,\varepsilon).\]
We have already shown that \(1_G\in U_\varepsilon\).
Let \(g\in U_\varepsilon\) and let \(x\in S^1\). We need only show that \(d((x)g,x)<\varepsilon\).
Let \(i<k\) be fixed such that \(x\in [p_i,p_{i+1}]\). 
We wish to leverage the ideas from the diagram to show that none of our small intervals \((p_{i},,p_{i+1})\) can be moved by \(g\) very much. To conclude that \(g\in B_{d_\infty}(1_G,\varepsilon)\), we must also show that the interval \((p_{i},p_{i+1})g\) cannot be too large. 
Informally, we do this by showing that if \((p_{i},p_{i+1})g\) is large then there is not `enough space' for the remaining intervals.
Note that the intervals \((p_0, p_1),(p_1,p_2)\ldots, (p_{k-1},p_k)\) are disjoint and contain all but finitely many points of \(S^1\).
Thus by \cref{interval-preserve}, the sets 
\[(p_0, p_1)g,(p_1,p_2)g\ldots (p_{k-1},p_k)g\]
are disjoint open intervals which contain all but finitely many points of \(S^{1}\). It follows that
\[\sum_{0\leq j\leq k-1} \lambda((p_j, p_{j+1})g)=1.\]
By the choice of \(g\), the element \(t_{p_{j},p_{j+1}}^g\) does not commute with \(t_{p_{j,l},p_{j,r}}\) for \(0<j<k-1\). 
It follows that the support of \(t_{p_{j},p_{j+1}}^g\) intersects the support of \(t_{p_{j,l},p_{j,r}}\) for all \(0<j<k-1\).
Thus for \(0<j<k-1\), the interval \((p_j, p_{j+1})g\) intersects the interval \((p_{j,l},p_{j,r})\).
By the same reasoning, \((p_j, p_{j+1})g\) intersects the interval \((p_{j+1,l},p_{j+1,r})\) for all \(0<j<k-1\). 
By the assumptions $p_{j,r}-p_{j,l}<\frac{1}{2}\min(p_{j+1}-p_j,p_{j}-p_{j-1})$, $\varepsilon<1$ and $p_{j+1}-p_j<\frac{\varepsilon}{32}$, we have \[p_{j-1}<p_{j,l}<p_{j}<p_{j,r}<p_{j+1,l}<p_{j+1}<p_{j+1,r}<p_{j+2}\] and \(|p_{j-1}-p_{j+2}|< \frac{1}{16}\). 
Recall that all of the displayed points other than possibly the first and last belong to \(D\).
 As the interval \((p_j, p_{j+1})g\) intersects both \((p_{j,l}, p_{j,r})\) and \((p_{j+1,l}, p_{j+1,r})\), the set  \((p_j, p_{j+1})g\) contains two points of distance at least \(|p_{j,r}- p_{j+1,l}|\) for all for \(0<j<k\). Thus, recalling that $p_{j,r}-p_{j,l}$ and $p_{j+1,r}-p_{j+1,l}$ are at most $\frac{\varepsilon}{64k}$, we have
\[\lambda((p_j, p_{j+1})g) \geq |p_{j,r}- p_{j+1,l}|\geq p_{j+1}- p_{j}-|p_{j,r}-p_j|-|p_{j+1}-p_{j+1,l}| \geq p_{j+1}-p_j - 2\frac{\varepsilon}{64k}=p_{j+1}-p_j - \frac{\varepsilon}{32k}\]
for \(0<j<k-1\). We have shown now that each interval (except possibly the end intervals) must retain most of its length after application of $g$. This forces the intervals to also stay reasonably small as we see below
\begin{align*}
    \lambda((p_i, p_{i+1})g)&= 1-\sum_{\substack{0\leq j<k\\j\neq i}}\lambda((p_j, p_{j+1})g)\\
    &\leq 1-\sum_{\substack{0< j<k-1\\j\neq i}}\lambda((p_j, p_{j+1})g)\\
    &\leq 1-\sum_{\substack{0< j<k-1\\j\neq i}}(p_{j+1}-p_j-\frac{\varepsilon}{32k})\\
    &\leq \frac{\varepsilon}{32}+1-\sum_{\substack{0< j<k-1\\j\neq i}}(p_{j+1}-p_j)\\
    &\leq (p_{i+1}-p_i)+(p_1-p_0)+(p_k-p_{k-1})+\frac{\varepsilon}{32}+1-\sum_{\substack{0\leq j<k}}(p_{j+1}-p_j)\\
    &\leq \frac{3\varepsilon}{32}+\frac{\varepsilon}{32}< \frac{\varepsilon}{8}+\frac{\varepsilon}{32}< \frac{\varepsilon}{4}.
\end{align*}
The above argument also shows that \(\lambda((p_0, p_{1})g),\lambda((p_1, p_{2})g), \lambda((p_{k-2}, p_{k-1})g),\lambda((p_{k-1}, p_{k})g)< \frac{\varepsilon}{4}\). 
We now know that \(g\) maps our intervals to new intervals of a similar size so we can almost finish the proof.
However, we still need to analyse endpoints as we have not defined points \(p_{0,l}\), \(p_{0,r}\), \(p_{1,l}\) and \(p_{1,r}\). To fix this, we define a value \(i'\in \{1,\ldots, k-1\}\) depending on cases:
\begin{enumerate}
    \item If \(i=0\), let \(i'=1\),
    \item if \(i=k-1\), let \(i'=k-2\),
    \item if \(i\notin\{0, k-1\}\), let \(i'=i\).
\end{enumerate}
In each case we know that \(p_{i'},p_{i'+1}\in D\). Also
\[d(x, p_{i'})\leq d(x,p_i)+d(p_{i}, p_{i'})\leq \frac{\varepsilon}{32}+\frac{\varepsilon}{32}=\frac{\varepsilon}{16}\] and \(\lambda((p_{i'}, p_{i'+1})g) < \frac{\varepsilon}{4}\). Recall that both the shorter interval between \((p_i)g\) and \((p_{i+1})g\), as well as the shorter interval between \((p_i)g\) and \((p_{i'})g\) have length less than \(\frac{\varepsilon}{4}\). 
So we have
\[d((x)g, (p_{i'})g)\leq d((x)g, (p_i)g)+d((p_{i})g, (p_{i'})g)< \frac{\varepsilon}{4}+\frac{\varepsilon}{4}=\frac{\varepsilon}{2}.\]
Moreover, we have shown earlier that there is a point \(b\in (p_{i'}, p_{i'+1})g\cap (p_{i',l},p_{i',r})\).
In conclusion, recalling that \(\lambda((p_{i'}, p_{i'+1})g)<\frac{\varepsilon}{4}\), we have
\begin{align*}
    d(x,(x)g)&\leq d(x,p_{i'})+d(p_{i'},b)+ d(b,(p_{i'})g)+d((p_{i'})g,(x)g) \\
    &\leq \frac{\varepsilon}{16}+\frac{\varepsilon}{32}+ \frac{\varepsilon}{4}+\frac{\varepsilon}{2}<\varepsilon
\end{align*}
as required.

\end{proof}

\cref{mainF} and \cref{mainT} now follow from the Lemma.

\begin{theorem}
    If \(G\leq \Homeo(S^1)\) is any group containing Thompson's group \(T\) (or any of the Thompson groups \(T_n\) with \(n\geq 2\)), then the group Zariski topology on \(G\) is the compact-open topology. In particular, it is a Hausdorff group topology.
\end{theorem}
\begin{proof}
    Taking \(D=\Z[1/n] \cap [0,1)\), the Thompson group \(T_n\) contains the elements required by \cref{mainFTlemma}.
    The compact-open topology is a Hausdorff group topology by \cref{compact-open-good}.
\end{proof}

\begin{theorem}
    If \(G\leq \Homeo([0,1])\) is any group containing Thompson's group \(F\) (or any of the Thompson groups \(F_n\) with \(n\geq 2\)), then the group Zariski topology on \(G\) is the compact-open topology. In particular, it is a Hausdorff group topology.
\end{theorem}
\begin{proof}
    For this proof we identify \(\Homeo([0,1])\) with the stabilizer of the point \(0\) in the group \(\Homeo(S^1)\).
    Note that the compact-open topology is the same whether \(\Homeo([0,1])\) acts on \([0,1]\) or \(S^1\).
    Taking \(D=\Z[1/n] \cap [0,1)\), the Thompson group \(F_n\leq \Homeo([0,1])\leq \operatorname{Homeo}(S^1)\) contains the elements required by \cref{mainFTlemma}.
    The compact-open topology is a Hausdorff group topology by \cref{compact-open-good}.
\end{proof}


\section{Manifolds}

In this section we prove \cref{mainManifold}. Most of the proof for this has been established in previous sections so we can go straight to the main result.

\begin{theorem}
    Suppose that \(n\in \N\) and \(M\) is an \(n\)-manifold with exactly one connected component. Then the following are equivalent:
    \begin{enumerate}
        \item the group Zariski topology on \(\operatorname{Homeo}(M)\) is Hausdorff,
        \item the group Zariski topology on \(\operatorname{Homeo}(M)\) is a group topology,
        \item \(n\leq 1\).
    \end{enumerate}
\end{theorem}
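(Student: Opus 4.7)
The plan is to establish the cycle $(3)\Rightarrow(2)\Rightarrow(1)\Rightarrow(3)$, where the first and last implications carry the real content and the middle one is essentially formal.

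For $(3)\Rightarrow(2)$, I would split on $n$. If $n=0$, a connected $0$-manifold is a single point, so $\operatorname{Homeo}(M)$ is the trivial group and the Zariski topology is vacuously a Hausdorff group topology. If $n=1$, \cref{1manclass} gives $M\cong S^1$ or $M\cong \mathbb{R}$. The circle case is exactly \cref{mainT}. For the line, I would identify $\operatorname{Homeo}(\mathbb{R})$ with the stabilizer of a chosen point $p_\infty$ of $S^1$ inside $\operatorname{Homeo}(S^1)$, using the one-point compactification $S^1\cong \mathbb{R}\cup\{p_\infty\}$ (every self-homeomorphism of $\mathbb{R}$ extends uniquely by fixing $p_\infty$, giving a homeomorphism of $S^1$). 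I would then apply \cref{mainFTlemma} to this subgroup with $D$ chosen to be a countable dense subset of $S^1\setminus\{p_\infty\}$: for each $p<q$ in $D$, any piecewise-linear homeomorphism of $\mathbb{R}$ that is the identity outside $(p,q)$ and non-identity on $(p,q)$ fixes $p_\infty$ and hence lies in the stabilizer, supplying the required $t_{p,q}$. The lemma then identifies the Zariski topology on $\operatorname{Homeo}(\mathbb{R})$ with the subspace compact-open topology from $\operatorname{Homeo}(S^1)$, which is a group topology since $\operatorname{Homeo}(S^1)$ is a topological group (\cref{compact-open-good}) and the stabilizer is a subgroup.

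For $(2)\Rightarrow(1)$, the Zariski topology is always $T_1$ (for each $g\in G$, the subbasic set $\{x\in G:1_G\neq g^{-1}x\}$ is the complement of $\{g\}$), and any $T_1$ group topology is Hausdorff; so this implication is immediate. For $(1)\Rightarrow(3)$, I would argue the contrapositive: suppose $n\geq 2$. Then \cref{hightrans} says $\operatorname{Homeo}(M)$ acts highly transitively on $M$, and being locally Euclidean of dimension at least $1$, $M$ is Hausdorff with no isolated points. Hence \cref{maingeneral} gives that the Zariski topology on $\operatorname{Homeo}(M)$ is irreducible. High transitivity on a set with more than one point forces $|\operatorname{Homeo}(M)|\geq 2$; but an irreducible topology on a space with at least two points cannot be Hausdorff (two distinct points would have to be separated by disjoint non-empty open sets, contradicting irreducibility). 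So (1) fails, closing the cycle.

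The main obstacle is the case $M\cong\mathbb{R}$ in $(3)\Rightarrow(2)$: the results previously established in the paper cover $\operatorname{Homeo}([0,1])$ and $\operatorname{Homeo}(S^1)$ rather than $\operatorname{Homeo}(\mathbb{R})$ directly, so one needs the one-point compactification reduction to apply \cref{mainFTlemma}. Once this is set up, the remainder of the proof is a routine assembly of the previously established results.
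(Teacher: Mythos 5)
Your proposal is correct and follows essentially the same route as the paper: the same cycle of implications, with $T_1$ plus group topology giving Hausdorff, \cref{hightrans} plus \cref{maingeneral} ruling out $n\geq 2$, and \cref{1manclass}, \cref{mainT} and the interval results handling $n\leq 1$. The only variation is the $M\cong\mathbb{R}$ case, where the paper simply uses the abstract isomorphism $\operatorname{Homeo}(\mathbb{R})\cong\operatorname{Homeo}([0,1])$ and quotes \cref{mainF}, while you re-run \cref{mainFTlemma} on the stabilizer of the point at infinity in $\operatorname{Homeo}(S^1)$ — equally valid, provided you take $t_{p,q}$ with support exactly $(p,q)$ (not merely contained in it), which a PL bump supplies.
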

\begin{proof}
    \((2)\Rightarrow (1):\) The group Zariski topology is always a \(T_1\) topology.
    It is well known that every \(T_1\) topological group is Tychonoff. In particular every \(T_1\) topological group is Hausdorff.

\((1)\Rightarrow (3):\) 
By \cref{maingeneral} + \cref{hightrans}, the Zariski topology is not Hausdorff when \(n\geq 2\).

\((3)\Rightarrow (2):\)
By \cref{1manclass}, the only manifolds of dimension at most \(1\) with exactly one connected component are \(\mathbb{R}\), \(S^1\), and a single point space.
In the case of a point, \(\operatorname{Homeo}(M)\) is the trivial group so is discrete.
In the case of \(\mathbb{R}\), we have \(\operatorname{Homeo}(M)\cong \operatorname{Homeo}([0, 1])\) so we apply \cref{mainF}.
The case of \(S^1\) follows from \cref{mainT}.
\end{proof}
\section{Questions}
By \cref{mainV}, the group Zariski topology on \(V\) is not Hausdorff and so \cite{markov1946unconditionally} implies that the Markov topology on \(V\) is not Hausdorff either.
As each orbit of \(V\) is dense, this group acts faithfully on any of its orbits. 
As such it inherits a group topology from the pointwise topology on the group \(\operatorname{Sym}(o)\) where \(o\) is an orbit of \(V\) (the topology on \(\operatorname{Sym}(o)\) treats \(o\) as a discrete set).
It is not clear what the intersection of these topologies is, in particular whether the intersection is already the Zariski-Markov topology on \(V\).
On the other hand \cref{mainF} and \cref{mainT} show that every Hausdorff group topology on \(F\) and \(T\) contains the compact-open topology.
They also admit the pointwise topology inherited from the symmetric groups on any of their orbits as well as the discrete topology. This leads us to the following questions.
\begin{question}
    Are there any other Hausdorff group topologies on any of Thompson groups \(F\), \(T\) and \(V\) other than the pointwise topologies (with respect to one of the orbits of its usual action), the compact-open topology, joins of these, and the discrete topology.
\end{question}
\begin{question}
    How many Hausdorff group topologies do the groups \(F\), \(T\) and \(V\) admit?
\end{question}
The results of \cite{markov1946unconditionally, DDSabelianzariski} imply that the Zariski and Markov topologies agree for countable groups and abelian groups. 
The group \(\operatorname{Homeo}(2^\omega)\) is in neither of these classes. As such, given \cref{mainV}, the following are natural questions.
\begin{question}
Is the Markov topology on \(\operatorname{Homeo}(2^\omega)\) equal to the Zariski topology?
\end{question}
\begin{question}
      Is the Markov topology on \(\operatorname{Homeo}(2^\omega)\) Hausdorff?
\end{question}

If we give \(2^\omega\) the lexicographic order and the induced circular order from this total order, then one can show that the subgroup of \(\operatorname{Homeo}(2^\omega)\) which preserves or reverses this circular order is isomorphic to \(\operatorname{Homeo}(S^1)\).
As such, \cref{mainV} in a sense says that all sufficiently large subgroups of \(\operatorname{Homeo}(2^\omega)\) which preserve the circular order have a Hausdorff Zariski topology.
Conversely, \cref{maingeneral} says that all highly transitive subgroups of \(\operatorname{Homeo}(2^\omega)\) do not have a Hausdorff Zariski topology. This leads us to the following question.

\begin{question}
    Can one classify which ``sufficiently large" subgroups of \(\operatorname{Homeo}(2^\omega)\) have a Hausdorff Zariski topology (for some reasonable notion of sufficiently large).
\end{question}

\section*{Acknowledgements}
 The author would like to thank the anonymous referee for their helpful comments and
careful reading of the paper. The author would also like to thank Naftoli Kolodny for comments on the previous draft.
\bibliography{bib}
\bibliographystyle{alpha}
\end{document}